\numberwithin{equation}{section}
\theoremstyle{plain}
\newtheorem{theorem}{Theorem}[section]
\newtheorem{lemma}[theorem]{Lemma}
\newtheorem{sublemma}[theorem]{Sublemma}
\newtheorem{proposition}[theorem]{Proposition}
\newtheorem{corollary}[theorem]{Corollary}
\newtheorem{conjecture}[theorem]{Conjecture}
\theoremstyle{definition}
\newtheorem{definition}[theorem]{Definition}
\newtheorem{example}[theorem]{Example}
\newtheorem{remark}[theorem]{Remark}
\newtheorem{question}[theorem]{Question}
\newcommand{\mc}{\mathcal}
\newcommand{\Z}{\mathbb{Z}}
\newcommand{\ra}{\rightarrow}
\newcommand{\lra}{\longrightarrow}
\newcommand{\Hom}{\text{Hom}}
\newcommand{\End}{\text{End}}
\newcommand{\Specm}{\text{Specm}}
\newcommand{\Rep}{{\sf Rep}}
\renewcommand{\Vec}{{\sf Vec}}
\let\c@equation\c@theorem  
\begin{document}

\title[Semisimple Hopf actions on commutative domains]
{Semisimple Hopf actions on commutative domains}

\author{Pavel Etingof and Chelsea Walton}

\address{Etingof: Department of Mathematics, Massachusetts Institute of Technology, Cambridge, Massachusetts 02139,
USA}

\email{etingof@math.mit.edu}

\address{Walton: Department of Mathematics, Massachusetts Institute of Technology, Cambridge, Massachusetts 02139,
USA}

\email{notlaw@math.mit.edu}

\bibliographystyle{alpha}       

\begin{abstract}
Let $H$ be a semisimple (so, finite dimensional) Hopf algebra over an algebraically closed field $k$ of characteristic zero and let $A$ be a commutative domain over $k$. We show that if $A$ arises as an $H$-module algebra via an inner faithful $H$-action, then $H$ must be a group algebra. This answers a question of E. Kirkman and J. Kuzmanovich and partially answers a question of M. Cohen.

The main results of this article extend to working over $k$ of positive characteristic. On the other hand, we obtain results on Hopf actions on Weyl algebras as a consequence of the main theorem.
\end{abstract}

\subjclass[2010]{16K20, 16T05}
\keywords{coideal subalgebra, commutative domain, Hopf algebra action, inner faithful, semisimple, Weyl algebra}

\maketitle



\section{Introduction} \label{sec:intro}

We work over an algebraically closed field $k$ of characteristic zero, unless stated otherwise. Take $H$ to be a finite dimensional Hopf algebra over $k$. Let us begin by considering the following question of Miriam Cohen. 

\begin{question} \label{ques:Cohen}  \cite[Question 2]{Cohen:survey}
Let $H$ be a finite dimensional noncocommutative Hopf algebra acting on a commutative algebra $A$. Can such an action be faithful? In other words, can $A$ be a faithful left $A\#H$-module?
\end{question}

This question was answered negatively in the case that $A$ is a field  and $S^2 \neq id$, where $S$ is the antipode of $H$ \cite[Theorem~3.2]{Cohen:survey}. Its full answer remains unknown. However, the notion of a {\it faithful} Hopf algebra action is a strong condition. In this work, we focus our attention on a weaker, yet interesting notion: {\it inner faithful} Hopf actions. In other words, we want to consider Hopf ($H$-) actions that do not factor through `smaller' Hopf algebras, say $H/I$ for some nonzero Hopf ideal $I$ of $H$ (Definition~\ref{def:infaith}). One may argue that inner faithfulness is a more useful notion than faithfulness as one can pass uniquely to an inner faithful Hopf action if necessary.

It is known that there do indeed exist inner faithful actions of a {\it nonsemisimple} noncocommutative Hopf algebras on commutative algebras (see e.g. \cite{Allman}). In particular, take $H$ to be the $4$-dimensional Sweedler algebra and $A$ to be the commutative polynomial ring in two variables \cite[Section~3.2]{Allman}.
In light of this result, Ellen Kirkman and Jim Kuzmanovich proposed the following question.

\begin{question} \label{ques:KK} \cite[Question 0.8]{CWZ:Nakayama}
Suppose that $H$ is semisimple and acts inner faithfully on a commutative domain over an algebraically closed field of characteristic zero. Must $H$ be a group algebra?
\end{question}

This question was answered positively in the case that $H$ is semisimple and $A$ is a commutative polynomial ring in two variables \cite[Proposition~0.7]{CWZ:Nakayama}. Now, the main result of this article is a full affirmative answer to Question~\ref{ques:KK}, which also yields a partial answer to Question~\ref{ques:Cohen} above.

\begin{theorem} \label{thm:main} (Theorem \ref{thm:mainK})
If a cosemisimple Hopf algebra $H$ over $k$ acts inner faithfully on a  commutative domain over $k$, then $H$ is a finite group algebra.
\end{theorem}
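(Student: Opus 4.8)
The plan is to reduce to the case where the commutative domain is a \emph{field} and then to show that such an action forces $H$ to be cocommutative, which for a semisimple $H$ over an algebraically closed field of characteristic zero is equivalent to $H$ being a group algebra. Since $H$ is finite dimensional, inner faithfulness is witnessed by finitely many elements of $A$, so I would first pass to a finitely generated $H$-stable subdomain on which the action is still inner faithful, and then extend the action to the field of fractions $K=\operatorname{Frac}(A)$; this extension is standard for finite dimensional $H$, and inner faithfulness is preserved because a Hopf ideal annihilating $K$ already annihilates $A\subseteq K$. I would also record that by the Larson--Radford theorem, semisimplicity of $H$ in characteristic zero gives $S^2=\mathrm{id}$; this is exactly why Cohen's theorem does not apply and why the field case is the genuine heart of the matter. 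Thus it suffices to treat an inner faithful action of $H$ on a field $K$ and to prove $H\cong kG$.

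Over $k$ algebraically closed of characteristic zero a semisimple Hopf algebra is a group algebra precisely when it is cocommutative, equivalently when it is spanned by its grouplike elements, so the goal becomes to prove cocommutativity. The module-algebra axiom makes the multiplication $m\colon K\otimes K\to K$ an $H$-module map, and combining this with commutativity $ab=ba$ yields, for all $h\in H$ and $a,b\in K$, the identity $\sum h_1(a)\,h_2(b)=\sum h_2(a)\,h_1(b)$. Equivalently, every element $\Delta(h)-\tau(\Delta(h))$ of $H\otimes H$ is killed by the map sending $x\otimes y$ to the operator $(a,b)\mapsto x(a)\,y(b)$. Cocommutativity is exactly the vanishing of all $\Delta(h)-\tau(\Delta(h))$, so everything reduces to showing this map is injective enough on such antisymmetrized elements.

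Using cosemisimplicity, I would decompose $H=\bigoplus_C C$ into simple subcoalgebras with $\dim_k C=d_C^2$ and comatrix basis $\{c_{ij}\}$ satisfying $\Delta c_{ij}=\sum_k c_{ik}\otimes c_{kj}$ and $\epsilon(c_{ij})=\delta_{ij}$. Defining $\Phi_C\colon K\to M_{d_C}(K)$ by $\Phi_C(a)_{ij}=c_{ij}\cdot a$, the module-algebra axiom gives $\Phi_C(ab)=\Phi_C(a)\Phi_C(b)$ and $\Phi_C(1)=I$, so $\Phi_C$ is a $k$-algebra homomorphism out of the field $K$, hence injective; each simple subcoalgebra is thereby incarnated as a $d_C$-dimensional representation of $K$ over itself, i.e. as a family of $k$-embeddings of $K$ into an algebraic closure. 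The antipode relations, applied through the fact that the action is an algebra map $H\to\operatorname{End}_k(K)$, force these embeddings to be invertible, i.e. automorphisms of $K$ fixing $K^H$, so they live in the finite group $\Gamma=\operatorname{Aut}(K/K^H)$, into which $G(H)$ already injects.

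The decisive and hardest step is to show $d_C=1$ for every simple subcoalgebra, i.e. that the comatrix structure collapses to grouplikes. This cannot follow from commutativity of $K$ alone: the commutative Hopf algebra $\mathcal{O}(G)=k^G$ has simple subcoalgebras of dimension $\geq 4$ whenever $G$ is nonabelian, yet it is cosemisimple and not a group algebra. What rules this out is the combination of inner faithfulness with the \emph{domain} (not merely commutative) structure: a genuinely $\geq 2$-dimensional simple subcoalgebra would produce distinct automorphism-components whose differing isotypic pieces force products of nonzero elements to vanish, contradicting that $K$ is a field; this is the same mechanism forcing a faithful $\mathcal{O}(G)$-action, i.e. a faithful $G$-grading, on a domain to have abelian support. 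Pushing this through shows $\Phi_C$ is simple only when $d_C=1$, so every element of $H$ acts through the field automorphisms in $\Gamma$, and inner faithfulness then identifies $H$ with the span of its grouplikes, giving $H=kG(H)$ with $G(H)$ finite. I expect the delicate point to be precisely this passage from ``the image of each simple subcoalgebra diagonalizes into automorphisms'' to ``$H$ itself is cocommutative,'' where one must use inner faithfulness to prevent the action from concealing a noncocommutative piece in the kernel.
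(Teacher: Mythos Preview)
Your approach diverges from the paper's and has a genuine gap at the step you yourself call ``decisive.''

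The paper never passes to the fraction field and never touches comatrix coefficients. Working dually with $K=H^*$ coacting on $A$ via $\rho$, each closed point $\chi\in X=\operatorname{Specm}(A)$ yields an algebra map $\rho_\chi=(\chi\otimes\mathrm{id})\circ\rho\colon A\to K$ whose image $A_\chi$ is a right coideal subalgebra of $K$. The key input is a structural finiteness theorem proved separately by fusion-category rigidity (classifying module categories over $\operatorname{Rep}(K)$ and module functors between them): \emph{a semisimple Hopf algebra has only finitely many coideal subalgebras}. Since $X$ is irreducible and $\chi\mapsto A_\chi$ is a regular map into this finite set, it is constant on a dense open with value some $B$, and then $A_\chi\subseteq B$ for all $\chi$. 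The span $L_A$ of all $A_\chi$ generates a Hopf subalgebra through which the coaction factors; inner faithfulness forces $\langle L_A\rangle=K$. But $L_A=B=A_\chi$ for a generic $\chi$, and this is commutative because it is a quotient of the commutative ring $A$. Hence $K$ is commutative and $H$ is cocommutative, so a group algebra. The domain hypothesis enters only as irreducibility of $X$; no Galois-theoretic step appears.

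In your sketch the gap is earlier than where you locate it. You assert that $\Phi_C$ ``diagonalizes into automorphisms,'' but nothing written establishes this: $\Phi_C\colon F\to M_{d_C}(F)$ (with $F=\operatorname{Frac}(A)$) is only a $k$-algebra map, not an $F$-algebra map, so its commutative image need not be simultaneously diagonalizable over $F$, and there is no evident reason the diagonal entries, even after extension, are $k$-automorphisms of $F$. The antipode identity $\sum_k S(c_{ik})c_{kj}=\delta_{ij}1$ acts through the $H$-module structure on $F$, not through any ring homomorphism, so it does not by itself manufacture inverses in $\operatorname{Aut}(F/F^H)$. Your $k^G$ analogy succeeds precisely because a $k^G$-action \emph{is} a $G$-grading: then $\Phi_C(a)=\rho(g)a$ on homogeneous $a\in A_g$, and commutativity of the image directly forces the support to commute. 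For a general semisimple $H$ there is no such grading to leverage, and the heuristic ``differing isotypic pieces force zero divisors'' is not substantiated; indeed you give no argument that a $d_C\ge 2$ subcoalgebra produces a nontrivial relation in $F$. The paper's finiteness-of-coideal-subalgebras theorem is exactly the structural replacement for this missing step, obtained by an orthogonal, categorical route rather than an elementwise one.
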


\noindent Note that semisimplicity and cosemisimplicity are equivalent over a field of characteristic zero.

As a consequence, we also answer a question of \cite{CWWZ:filtered} pertaining to finite dimensional Hopf actions on Weyl algebras.

\begin{corollary} \label{cor:Weylintro} (Corollary~\ref{cor:Weyl})
Let $H$ be a finite dimensional Hopf algebra acting inner faithfully on the $n$-th Weyl algebra $A_n(k)$ with the standard filtration. If the $H$-action preserves the filtration of $A_n(k)$, then $H$ is a finite group algebra.
\end{corollary}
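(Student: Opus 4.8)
The plan is to deduce the corollary from the main theorem by passing from the filtered Weyl algebra $A_n(k)$ to its associated graded algebra. Since the $H$-action preserves the standard filtration, it induces an $H$-action on $\mathrm{gr}\,A_n(k)$. The crucial observation is that the associated graded of the $n$-th Weyl algebra under the standard filtration is the commutative polynomial ring $k[x_1,\dots,x_n,y_1,\dots,y_n]$ in $2n$ variables, which is indeed a commutative domain over $k$. So the first step is to verify carefully that a filtration-preserving $H$-action on $A_n(k)$ yields a genuine $H$-module algebra structure on this commutative domain.

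The second step is to reconcile the inner faithfulness hypothesis. The subtle point is that inner faithfulness of the action on $A_n(k)$ need not immediately transfer to the induced action on $\mathrm{gr}\,A_n(k)$, since passing to the associated graded can kill some of the action. I would therefore argue as follows: let $I$ be the largest Hopf ideal of $H$ that acts as zero on $\mathrm{gr}\,A_n(k)$, and set $\bar H = H/I$, which is a finite dimensional Hopf algebra acting inner faithfully on the commutative domain $\mathrm{gr}\,A_n(k)$. By Theorem~\ref{thm:main} — applied to $\bar H$, which inherits cosemisimplicity if $H$ is cosemisimple, though here we only assume $H$ finite dimensional — one wants to conclude $\bar H$ is a group algebra. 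The gap to close is that the corollary assumes $H$ is merely finite dimensional, not cosemisimple, so one cannot invoke the theorem directly; the resolution must be that filtration-preservation forces semisimplicity.

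The key mechanism I expect the proof to use is that a finite dimensional Hopf algebra acting inner faithfully while preserving a filtration with commutative-domain associated graded is automatically semisimple (equivalently cosemisimple in characteristic zero). The heart of the matter is a Radford-type or semisimplicity argument: because the degree-zero part of the filtration is just $k$ and the action preserves degrees, the action is in a strong sense ``unipotent-free'' on the graded pieces, which rules out the nonsemisimple phenomena (such as those exhibited by the Sweedler algebra acting on a polynomial ring, where the action genuinely mixes filtration degrees). Once semisimplicity of $\bar H$ is established, Theorem~\ref{thm:main} applies to give that $\bar H$ is a group algebra, and then one lifts this conclusion back to $H$ itself.

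I expect the main obstacle to be precisely the reduction to the cosemisimple case, i.e.\ showing that preserving the filtration prevents the kind of nonsemisimple inner faithful action that the Sweedler algebra admits on $k[x,y]$ without any filtration constraint. Everything else — identifying $\mathrm{gr}\,A_n(k)$ with a polynomial ring, checking it is a commutative domain, and transporting the group-algebra conclusion — is essentially formal bookkeeping. The delicate step is verifying that the filtration-preservation hypothesis is exactly the extra rigidity needed to force the hypotheses of Theorem~\ref{thm:main}, and I would concentrate the bulk of the argument there.
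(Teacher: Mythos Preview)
Your overall strategy --- pass to $\mathrm{gr}_F A_n(k) \cong k[x_1,\dots,x_n,y_1,\dots,y_n]$ and invoke Theorem~\ref{thm:main} --- is exactly what the paper does, and you correctly locate the only real obstacle: one must first know $H$ is semisimple. But the two technical steps you propose are handled differently in the paper, and in both cases the paper's route is cleaner.

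For semisimplicity, the paper does not prove it internally at all: it simply cites \cite[Theorem~0.3]{CWWZ:filtered}, which says that any finite dimensional Hopf algebra acting inner faithfully on $A_n(k)$ while preserving the standard filtration is automatically semisimple. Your proposed mechanism (``degree-zero part is $k$, so the action is unipotent-free on graded pieces'') is not an argument and, as stated, is not correct in general --- nonsemisimple Hopf algebras can act preserving a grading with $F_0 = k$. The semisimplicity here is a genuine input specific to the Weyl-algebra setting, and you should treat it as such rather than expect a soft filtration argument.

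For inner faithfulness, your detour through $\bar H = H/I$ is unnecessary and leaves you with an unexplained ``lift back to $H$'' step. The paper instead uses semisimplicity of $H$ directly: since $H$ is semisimple, every short exact sequence of $H$-modules splits, so $A_n(k)$ and $\mathrm{gr}_F A_n(k)$ are isomorphic \emph{as $H$-modules}. Hence the $H$-action on $\mathrm{gr}_F A_n(k)$ is itself inner faithful, and Theorem~\ref{thm:main} applies to $H$ (not a quotient) to give that $H$ is a group algebra. This is packaged as Proposition~\ref{pro:filtered} in the paper.
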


The paper is organized as follows. In Section~\ref{sec:backg}, we provide background material on Hopf algebra actions and tensor categories. Section~\ref{sec:coideal} provides results on coideal subalgebras of finite dimensional Hopf algebras. Such results are needed for the proof of the main theorem, Theorem~\ref{thm:main}, which is presented in Section~\ref{sec:proofmain}. Finally, Section~\ref{sec:directions} discusses consequences  of Theorem~\ref{thm:main}; these include Corollary~\ref{cor:Weylintro}  and versions of the main results for $k$ of positive characteristic. For instance, we have the following result.

\begin{theorem} \label{thm:maincharp} (Theorem~\ref{thm:mainKcharp}) 
Theorem~\ref{thm:main} holds if the field $k$ has characteristic $p>0$ and $K$ is a semisimple and cosemisimple Hopf algebra.
\end{theorem}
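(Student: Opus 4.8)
The plan is to re-run the proof of Theorem~\ref{thm:mainK} almost verbatim, keeping careful track of the single point at which characteristic zero is actually used and replacing it by an ingredient valid in characteristic $p$ under the two standing hypotheses. In characteristic zero the remark following Theorem~\ref{thm:main} (semisimple $\Leftrightarrow$ cosemisimple) is invoked only to guarantee, via Larson--Radford, that the antipode is involutive, $S^2 = \mathrm{id}$, and that $\dim_k K$ is invertible in $k$. Neither consequence follows from semisimplicity alone in characteristic $p$, which is exactly why cosemisimplicity is imposed as a separate hypothesis: for a Hopf algebra over $k$ that is simultaneously semisimple and cosemisimple one still has $S^2 = \mathrm{id}$ (by the Larson--Radford argument, valid in arbitrary characteristic) together with $p \nmid \dim_k K$ (a known consequence of semisimplicity plus cosemisimplicity). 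I would record these two facts at the very start, so that every later appeal in the characteristic-zero proof to ``$S^2 = \mathrm{id}$'' or to the invertibility of $\dim_k K$ remains justified. A lifting argument — lifting $K$ to a semisimple Hopf algebra in characteristic zero and applying Theorem~\ref{thm:main} — seems less attractive here, since the commutative domain $A$ is typically infinite dimensional and there is no canonical lift of the $K$-module algebra structure on it; so direct adaptation is preferable.

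With that in hand I would check the reductions first. The passage to a finitely generated $K$-stable subalgebra and then to its field of fractions, together with the semisimplicity of the relevant smash products $A \# K$ and $\mathrm{Frac}(A) \# K$, is a Maschke-type phenomenon: it requires $\dim_k K$ to be invertible in $k$ rather than characteristic zero as such, and this we now have. Next, $\Rep(K)$ is a fusion category over $k$ whose dimension $\dim_k K$ is invertible in $k$, so the results of Section~\ref{sec:coideal} on coideal subalgebras — purely algebraic statements, valid once $S^2 = \mathrm{id}$ holds — apply word for word. I would then re-derive the constraint imposed by commutativity of the domain exactly as in the characteristic-zero case, concluding that $K$ is spanned by its grouplike elements, i.e.\ is a finite group algebra.

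The step I expect to be the genuine obstacle is separability. In characteristic $p$ the field extension $\mathrm{Frac}(A)/\mathrm{Frac}(A)^K$ that enters the Galois-type part of the argument may a priori be inseparable, and any claim identifying it as an $H^*$-Galois (or classically Galois) extension, or using faithful flatness of invariants, could break down. I would aim to neutralize this using $p \nmid \dim_k K$: the integral of $K$ supplies an averaging (trace) operator normalized by $\dim_k K$, which yields the needed separability/projectivity precisely as in characteristic zero once that dimension is invertible. The one remaining point I would scrutinize is whether the original proof ever appeals to a classification of symmetric or Tannakian fusion categories (no clean analogue of which is available in characteristic $p$); if so, I would route around it through the elementary coideal-subalgebra analysis of Section~\ref{sec:coideal}, which for this reason is stated in a characteristic-independent form.
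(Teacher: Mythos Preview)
Your proposal misidentifies the single point at which characteristic zero is genuinely used. The proof of Theorem~\ref{thm:mainK} never passes to $\mathrm{Frac}(A)$, never invokes semisimplicity of $A\#K$, never appeals to an $H^*$-Galois extension or to separability of a field extension, and never uses a classification of symmetric or Tannakian categories. So the issues you plan to ``neutralize'' (inseparability of $\mathrm{Frac}(A)/\mathrm{Frac}(A)^K$, averaging by the integral, $S^2=\mathrm{id}$) simply do not arise, and your conclusion that $K$ is ``spanned by grouplikes'' is not how the argument ends: one shows $K$ is \emph{commutative} (because $K=A_\chi$ for some $\chi$, and $A_\chi$ is a quotient of the commutative algebra $A$).

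The actual dependence on characteristic zero is hidden in your sentence ``the results of Section~\ref{sec:coideal} on coideal subalgebras \ldots\ apply word for word.'' They do not. Theorem~\ref{thm:sscoideal} (finiteness of coideal subalgebras) rests on Proposition~\ref{pro:finite}, which in turn is Ocneanu rigidity for fusion categories (\cite[Corollary~2.35, Theorem~2.16]{ENO:fusion}); this is not ``purely algebraic'' and fails in positive characteristic without an extra hypothesis. The correct fix, and the one the paper uses, is that Proposition~\ref{pro:finite} extends to characteristic $p$ provided the fusion category $\mc{C}$ is \emph{nondegenerate} in the sense of \cite[\S9]{ENO:fusion}, and $\Rep(K)$ is nondegenerate precisely when $K$ is both semisimple and cosemisimple. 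Once that is in place, the rest of the characteristic-zero proof truly does go through unchanged.
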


\noindent Several questions and conjectures are also posed in Section~\ref{sec:directions}.


\section{Background material} \label{sec:backg}

In this section, we provide a background discussion of Hopf algebra actions and of tensor categories.


\subsection{Hopf algebra actions} \label{ssec:H-actions}

Let $H$ be a finite dimensional Hopf algebra over $k$ with comultiplication $\Delta$, counit $\epsilon$, and antipode $S$. Moreover, let
$H^*$ denote the dual Hopf algebra to $H$.
 A {\it left $H$-module} $M$ has left $H$-action denoted by $\cdot : H
\otimes M \rightarrow M$. Similarly, a {\it right $H$-comodule} $M$ has
right $H$-coaction denoted by $\rho: M \rightarrow M \otimes H$. Since
$H$ is finite dimensional,  $M$ is a left $H$-module if and
only if $M$ is a right $H^*$-comodule.

We recall basic facts about Hopf algebra actions; refer to
\cite{Montgomery} for further details. Denote the Hopf algebra $H^*$ by $K$. Here, $H$ will {\it act on} algebras (from the left), whereas $K$ will {\it coact on} algebras (from the right).

\begin{definition} \label{def:Hopfact} Given a Hopf algebra $H$ and an algebra $A$, we say
that {\it $H$ acts on $A$} (from the left) if $A$ is a left $H$-module and
$$h \cdot (ab) = \sum (h_1 \cdot a)(h_2 \cdot b)
\text{\hspace{.1in} and \hspace{.1in}} h \cdot 1_A = \epsilon(h)
1_A$$ for all $h \in H$, and for all $a,b \in A$, where $\Delta(h) =
\sum h_1 \otimes h_2$ (Sweedler's notation).

Dually, a Hopf algebra $K$ {\it coacts on} $A$ (from the right) if $A$ is a right $K$-comodule and $\rho(ab) = \rho(a) \rho(b)$ for all $a,b \in A$. 
\end{definition}

We also want to restrict ourselves to $H$-actions (and $K$-coactions) that do not factor
through `smaller' Hopf algebras.

\begin{definition}\cite{BanicaBichon} \label{def:infaith} Given a left $H$-module $M$, we say that $M$ is {\it
inner faithful} over $H$, if $IM\neq 0$ for every nonzero Hopf ideal
$I$ of $H$. Given a Hopf action of $H$ on an algebra $A$, we say that this
{\it action is  inner faithful} if the left $H$-module $A$ is inner
faithful.

Dually, a right $K$-comodule $M$ is called {\it inner faithful} if
for any proper Hopf subalgebra $K'\subsetneq K$, we have that $\rho(M)$ is not contained in
$M\otimes K'$. In this case, we say that the $K$-coaction on $M$ is
{\it inner faithful}. Similarly, we can define an {\it inner faithful $K$-coaction} on an algebra $A$.
\end{definition}

Note that an $H$-action on an algebra $A$ is inner faithful if and only if the
$H^*=K$-coaction on $A$ is inner faithful.


\subsection{Tensor categories} \label{ssec:tencat}

We discuss the notion of a {\it tensor category} and, in particular, of a {\it fusion category}, below. This will mainly be used in the proof of Theorem~\ref{thm:sscoideal} in Section~\ref{sec:coideal}. Refer to \cite{BakalovKirillov,ENO:fusion} for further details.
Recall that $k$ is an algebraically closed field of characteristic zero.

\begin{definition} \label{def:tensorcat}
Let $\mc{C}$ be a $k$-linear, abelian, rigid, monoidal category with unit object {\bf 1} that is artinian (so, objects have finite length) and Hom spaces are finite dimensional. 
We call $\mc{C}$ a {\it tensor category} over $k$ if the bifunctor $\otimes: \mc{C} \times \mc{C} \ra \mc{C}$ is bilinear on morphisms and if $\End_{\mc{C}}({\bf 1}) \cong k$.

Recall that an abelian category is {\it semisimple} if every object is a direct sum of  simple objects, and is {\it finite} if it has enough projectives and has finitely many simple objects up to isomorphism. Now, a {\it fusion category} is a finite semisimple tensor category.
\end{definition}

For example, given $H$ a finite dimensional Hopf algebra over $k$, the category $\Rep(H)$ of (left) $H$-modules is a finite tensor category. Moreover, $\Rep(H)$ is a fusion category precisely when $H$ is semisimple.

We have that tensor categories (resp., fusion categories) categorify the notion of  rings (resp., semisimple rings). Similarly,  the notion of a {\it module category}  categorifies the concept of a module over a ring. By a module category, we mean a {\it right module category}.

\begin{definition} \label{def:modulecat} Let $\mc{C}$ be a tensor category with {\it associativity constraint} 
$$a_{X,Y,Z}: (X \otimes Y) \otimes Z \overset{\sim}{\lra} X \otimes (Y \otimes Z)$$ for all $X,Y,Z \in \mc{C}$ and unit object {\bf 1}.
A {\it right module category} over  $\mc{C}$ is a category $\mc{M}$ equipped with an {\it action bifunctor} $\otimes: \mc{M} \times \mc{C} \ra \mc{M}$ and a natural isomorphism
$$m_{M,X,Y}: M \otimes (X \otimes Y) \overset{\sim}{\lra} (M \otimes X) \otimes Y$$ for all $X,Y \in \mc{C}$ and $M \in \mc{M}$ so that
\begin{itemize}
\item the functor $M \mapsto M \otimes {\bf 1}: \mc{M} \ra \mc{M}$ is an autoequivalence and
\item the following {\it pentagon diagram}
\[
\xymatrix{
& M \otimes ((X \otimes Y) \otimes Z) 
\ar[dl]_{id_M \otimes a_{X,Y,Z}} 
\ar[dr]^{m_{M,X \otimes Y,Z}}&\\
M \otimes (X \otimes (Y \otimes Z))
\ar[d]_{m_{M,X,Y\otimes Z}}&&
(M \otimes (X \otimes Y)) \otimes Z
\ar[d]^{m_{M,X,Y\otimes id_Z}}\\
(M \otimes X) \otimes (Y \otimes Z) 
\ar[rr]^{m_{M\otimes X,Y,Z}}&&
((M \otimes X) \otimes Y) \otimes Z 
}
\]
is commutative for all $X,Y,Z \in \mc{C}$ and $M \in \mc{M}$.
\footnote{ This definition is equivalent to the usual definition of a module category involving the unit morphism and the triangle diagram.}
\end{itemize}

\end{definition}

Moreover, we also need to consider the following definitions pertaining to tensor categories $\mc{C}$ and module categories $\mc{M}$ over $\mc{C}$.

\begin{definition} Let $\mc{A}, \mc{B}$ be artinian abelian categories and  let $\mc{C}, \mc{D}$ be tensor categories. Also, let $\mc{M}, \mc{N}$ be module categories over $\mc{C}$ with associativity constraints $m,n$, respectively.
\begin{enumerate}
\item  An exact functor $G: {\mathcal A} \ra {\mathcal B}$ is {\it surjective} if any
object $Y \in {\mathcal B}$ is a subquotient of $G(X)$ for some $X \in {\mathcal A}$.
\item A {\it tensor functor} $F: \mc{C} \ra \mc{D}$ is an exact, faithful, $k$-linear, monoidal functor between tensor categories.
\item A $\mc{C}$-{\it module functor} from $\mc{M}$ to $\mc{N}$ consists of an additive functor $F:\mc{M} \ra \mc{N}$ and a natural isomorphism 
$$s_{M,X}: F(M \otimes X) \ra F(M) \otimes X, ~~\text{for all~} X \in \mc{C}, M \in \mc{M},$$
so that the following equations hold:
$$ (s_{M,X} \otimes id_Y) \circ s_{M\otimes X,Y} \circ F(m_{M,X,Y}) = n_{F(M),X,Y} \circ s_{M,X \otimes Y}$$
$$F(r_M) = r_{F(M)} \circ s_{M,{\bf 1}}$$
for all $X,Y \in \mc{C}$ and $M \in \mc{M}$. Here, $r_M: M \otimes {\bf 1} \overset{\sim}{\ra} M$ is the natural isomorphism.
\item Let Fun$_{\mc{C}}$($\mc{M}$,$\mc{N}$) denote the category of $\mc{C}$-module functors $\mc{M}\ra \mc{N}$ with natural transformations as morphisms.
\item A {\it module equivalence} $F: \mc{M} \ra \mc{N}$ of $\mc{C}$-module categories is a module functor ($F,s$) from $\mc{M}$ to $\mc{N}$ such that $F$ is an equivalence of categories.
\item A module category $\mc{M}$ over $\mc{C}$ is {\it indecomposable} if it is not equivalent to a nontrivial direct sum of nonzero module categories.
\end{enumerate}
\end{definition}
 
For example, consider $\Vec$, the category of finite dimensional vector spaces over $k$.
If $F: {\mathcal C} \ra \Vec$ is a tensor functor, then $\Vec$ becomes a (right) module category over ${\mathcal C}$ via
$V\otimes X:=V\otimes F(X)$, for $V\in \Vec$ and  $X\in {\mathcal C}$.


\section{Coideal subalgebras of finite dimensional Hopf algebras} \label{sec:coideal}

In this section, we establish several results pertaining to the number and structure of coideal subalgebras of finite dimensional Hopf algebras. In particular, we show that semisimple Hopf algebras have only finitely many coideal subalgebras; see Theorem~\ref{thm:sscoideal} below. Such a result fails for nonsemisimple Hopf algebras; see Example~\ref{ex:Sweedler}.

\begin{definition} \label{def:coidsubalg} 
A {\it right (respectively, left) coideal subalgebra} $B$ of a Hopf algebra $H$ is a subalgebra of $H$ with $\Delta(B) \subseteq B \otimes H$ (or $\Delta(B) \subseteq H \otimes B$).
\end{definition}

We will now show how coideal subalgebras could arise from coactions of Hopf algebras on commutative algebras. Consider the notation below.

\medskip
\noindent {\it Notation}. [$A$, $\chi$, $\rho_{\chi}$, $A_{\chi}$]
Let $A$ be a finitely generated commutative domain over $k$ and let $K$ be a finite dimensional Hopf algebra that coacts on $A$ via $\rho: A \ra A \otimes K$. Moreover, let $\chi:A \ra k$ be a character of $A$. Then, consider the following morphism
$$\rho_{\chi}=  (\chi \otimes id) \circ \rho: A \ra K.$$
Here, we identify $k \otimes K$ with $K$. We also  take $A_{\chi}$ to be the image of $\rho_{\chi}$ in $K$.
\medskip

\begin{lemma} \label{lem:Achi} 
Retain the notation above. The image $\rho_{\chi}(A) = A_{\chi}$ is a right coideal subalgebra of $K$.
\end{lemma}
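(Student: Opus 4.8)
The plan is to check the two defining conditions of Definition~\ref{def:coidsubalg} in turn: that $A_\chi$ is a subalgebra of $K$, and that $\Delta(A_\chi) \subseteq A_\chi \otimes K$. The subalgebra condition is immediate once we recognize $\rho_\chi$ as a composition of algebra homomorphisms. Since $K$ coacts on $A$, the coaction $\rho \colon A \ra A \otimes K$ is an algebra map, satisfying $\rho(ab) = \rho(a)\rho(b)$ and $\rho(1_A) = 1_A \otimes 1_K$. The character $\chi \colon A \ra k$ is an algebra map by definition, so $\chi \otimes \mathrm{id} \colon A \otimes K \ra k \otimes K \cong K$ is as well. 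Hence $\rho_\chi = (\chi \otimes \mathrm{id}) \circ \rho$ is an algebra homomorphism, and its image $A_\chi$ is a subalgebra of $K$ containing $1_K = \rho_\chi(1_A)$.

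The content of the lemma is the coideal condition, which I would establish using the comodule coassociativity axiom $(\rho \otimes \mathrm{id}_K) \circ \rho = (\mathrm{id}_A \otimes \Delta) \circ \rho$. Writing $\rho(a) = \sum a_{(0)} \otimes a_{(1)}$ in Sweedler notation, this axiom is the identity
\[
\sum \rho(a_{(0)}) \otimes a_{(1)} = \sum a_{(0)} \otimes \Delta(a_{(1)})
\]
in $A \otimes K \otimes K$. Applying $\chi \otimes \mathrm{id}_K \otimes \mathrm{id}_K$ to both sides, the right-hand side becomes $\sum \chi(a_{(0)}) \Delta(a_{(1)}) = \Delta(\rho_\chi(a))$, while the left-hand side becomes $\sum \rho_\chi(a_{(0)}) \otimes a_{(1)}$. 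Each term $\rho_\chi(a_{(0)})$ lies in $A_\chi$, so the left-hand side lies in $A_\chi \otimes K$. This shows $\Delta(\rho_\chi(a)) \in A_\chi \otimes K$ for every $a \in A$, and since the elements $\rho_\chi(a)$ span $A_\chi$, linearity of $\Delta$ yields $\Delta(A_\chi) \subseteq A_\chi \otimes K$.

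No step here presents a serious obstacle; the argument is essentially a bookkeeping exercise in Sweedler notation. The one point demanding care is the application of $\chi$ to the correct tensor factor, together with the observation that applying $\chi \otimes \mathrm{id}_K$ to $\rho(a_{(0)})$ reproduces $\rho_\chi(a_{(0)})$—that is, that partial evaluation of $\chi$ after iterating $\rho$ returns the map $\rho_\chi$ itself. Keeping the indices consistent when expanding $(\rho \otimes \mathrm{id}_K) \circ \rho$ is the main place an indexing slip could occur.
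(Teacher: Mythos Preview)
Your proof is correct and follows essentially the same approach as the paper: both arguments observe that $\rho_\chi$ is an algebra map, and then use comodule coassociativity $(\rho\otimes\mathrm{id})\circ\rho=(\mathrm{id}\otimes\Delta)\circ\rho$ together with application of $\chi\otimes\mathrm{id}\otimes\mathrm{id}$ to obtain the identity $\Delta\circ\rho_\chi=(\rho_\chi\otimes\mathrm{id})\circ\rho$, from which the right coideal condition is immediate. The only difference is cosmetic: the paper writes the computation at the level of composed morphisms, while you unpack it in Sweedler notation.
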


\begin{proof}
Since $\rho_{\chi}$ is an algebra morphism, $A_{\chi}$ is a subalgebra of $K$. So, it suffices to show that $\Delta \circ \rho_{\chi} = (\rho_{\chi} \otimes id) \circ \rho$ as morphisms. We have that

\begin{equation} \label{eq:rho_chi}
\begin{array}{rl}
(\rho_{\chi} \otimes id) \circ \rho 
&= (\chi \otimes id \otimes id) \circ (\rho \otimes id) \circ \rho\\
&= (\chi \otimes id \otimes id) \circ (id \otimes \Delta) \circ \rho
= \Delta  \circ (\chi \otimes id) \circ \rho
= \Delta \circ \rho_{\chi},
\end{array}
\end{equation}

as desired.
\end{proof}

Now, we consider the coaction of the Hopf subalgebra generated by the span of the coideal subalgebras $A_{\chi}$ of $K$ on $A$.

\medskip

\noindent {\it Notation}. [$L_A$] Given a $K=H^*$-coaction on $A$, let $L_A$ be the $k$-linear span of all coideal subalgebras $A_{\chi}$ of $K$. 

\medskip

\begin{lemma} \label{lem:L_A} Retain the notation above.
\begin{enumerate}
\item The coaction $\rho$ of $K$ on $A$ restricts to $\rho: A \ra A \otimes L_A$.
\item The linear span $L_A$ is a subcoalgebra of $K$. Thus,
the subalgebra $\langle L_A \rangle$ generated by $L_A$ is a Hopf subalgebra of $K$.
\end{enumerate}
\end{lemma}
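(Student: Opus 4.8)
The plan is to establish part (1) first, and then derive part (2) from it together with the coideal identity $\Delta \circ \rho_{\chi} = (\rho_{\chi} \otimes id) \circ \rho$ recorded in Lemma~\ref{lem:Achi}.

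For part (1), the essential input is that the characters of $A$ separate its elements. Since $A$ is a finitely generated commutative domain over the algebraically closed field $k$, it is reduced, and by the Nullstellensatz every maximal ideal has residue field $k$, so maximal ideals correspond to characters $\chi \colon A \ra k$; as a finitely generated algebra is a Jacobson ring, the intersection of the kernels of all characters equals the nilradical, which is $0$. I would then upgrade this to a linear-independence statement: for any finite-dimensional subspace $V \subseteq A$, the restrictions to $V$ of the characters of $A$ span the dual $V^*$ (otherwise some nonzero $v \in V$ would lie in the kernel of every character, contradicting separation). Consequently, given linearly independent elements $a_1, \dots, a_n \in A$, one can choose characters $\chi_1, \dots, \chi_n$ for which the matrix $(\chi_j(a_i))_{i,j}$ is invertible. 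Now, for $a \in A$, write $\rho(a) = \sum_i a_i \otimes k_i$ with the $a_i$ linearly independent; applying $\rho_{\chi_j} = (\chi_j \otimes id)\circ \rho$ yields $\rho_{\chi_j}(a) = \sum_i \chi_j(a_i)\, k_i \in A_{\chi_j} \subseteq L_A$ for each $j$. Since the coefficient matrix is invertible, each $k_i$ is a $k$-linear combination of the vectors $\rho_{\chi_j}(a)$, whence $k_i \in L_A$ and thus $\rho(a) \in A \otimes L_A$.

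For part (2), I would verify $\Delta(L_A) \subseteq L_A \otimes L_A$ on the spanning set $\{\rho_{\chi}(a)\}$ of $L_A$. By Lemma~\ref{lem:Achi}, $\Delta(\rho_{\chi}(a)) = (\rho_{\chi} \otimes id)(\rho(a))$; invoking part (1) to write $\rho(a) = \sum_i a_i \otimes \ell_i$ with $\ell_i \in L_A$, this equals $\sum_i \rho_{\chi}(a_i) \otimes \ell_i$, and each $\rho_{\chi}(a_i) \in A_{\chi} \subseteq L_A$, so indeed $\Delta(L_A) \subseteq L_A \otimes L_A$ and $L_A$ is a subcoalgebra. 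For the final assertion, because $\Delta$ is an algebra map and $\Delta(L_A) \subseteq L_A \otimes L_A \subseteq \langle L_A \rangle \otimes \langle L_A \rangle$, the subalgebra $\langle L_A \rangle$ is closed under $\Delta$ and hence is a sub-bialgebra; it is unital since $1_K = \rho_{\chi}(1_A) \in L_A$. As $K$ is finite dimensional, a sub-bialgebra is automatically stable under the antipode, so $\langle L_A \rangle$ is a Hopf subalgebra of $K$.

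The step I expect to be the main obstacle is part (1), specifically making precise the passage from ``characters separate elements'' to the linear-independence statement that allows inversion of the matrix $(\chi_j(a_i))$, since this is exactly where the hypotheses on $A$ (finitely generated, commutative, domain, over an algebraically closed field) are genuinely needed. The remaining steps are formal manipulations in Sweedler notation together with the standard fact that a finite-dimensional sub-bialgebra of a Hopf algebra is a Hopf subalgebra.
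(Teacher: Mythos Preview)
Your proof is correct and follows essentially the same route as the paper. The only cosmetic difference is in part~(1): the paper argues dually by taking $\psi \in L_A^{\perp}$ and showing that $(id \otimes \psi)\rho(a)$, viewed as a regular function on $\Specm(A)$, vanishes at every character and hence is zero by the Nullstellensatz, whereas you phrase the same Nullstellensatz input constructively via an invertible evaluation matrix $(\chi_j(a_i))$; part~(2) and the appeal to the ``sub-bialgebra $\Rightarrow$ Hopf subalgebra'' fact (the paper cites Radford, Proposition~7.6.1) are identical.
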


\begin{proof}
(a) It suffices to show that for any $\psi \in L_A^{\perp} \subseteq K^*$, we get that $$(id \otimes \psi) \circ \rho(a) = 0$$ for all $a \in A$. Fix an element $a \in A$. Let $X = \Specm(A)$ be the set of characters (or, equivalently, maximal ideals) in $A$. Then, $X$ is an affine algebraic variety and $f:= (id \otimes \psi) \rho(a) \in A$ can be viewed as a regular function on $X$. Recall that $A$ is a finitely generated domain. Now by the Nullstellensatz, to check that $f=0$, it suffices to check that $f(\chi) = \psi(\rho_{\chi}(a)) =0$ for all $\chi \in X$. So we need $\rho_{\chi}(a) \in L_A^{\perp \perp} = L_A$. Since $\rho_{\chi}(a) \in A_{\chi}$ and $A_{\chi} \subseteq L_A$, we are done.

\medskip

\noindent (b) We need to show for any $b \in A_{\chi}$, we have that $\Delta(b) \in L_A \otimes L_A$. Say $$b = \rho_{\chi}(a) = (\chi \otimes id) \circ \rho(a).$$ By (\ref{eq:rho_chi}), we have that
$\Delta(b) = (\rho_{\chi} \otimes id) \circ \rho(a)$. Now by part (a), $\Delta(b) \subseteq A_{\chi} \otimes L_A$ as desired.
This implies that $L_A$ is a subcoalgebra of $K$. Hence, $\langle L_A \rangle$ is a subbialgebra of $K$, which is actually a Hopf subalgebra of $K$ by \cite[Proposition~7.6.1]{Radford:book}.
\end{proof}

In the following example, we show how one computes the coideal subalgebras $A_{\chi}$ corresponding to a finite dimensional Hopf algebra $K$ and a $K$-comodule algebra $A$; refer to Lemma~\ref{lem:Achi}. As a consequence, we also illustrate that a nonsemisimple finite dimensional Hopf algebra can have infinitely many coideal subalgebras.

\begin{example} \label{ex:Sweedler} 
Let $H$ be the Sweedler Hopf algebra, which is 4-dimensional and non-semisimple.  It is generated by $g$ and $x$ where
\[
\begin{array}{lllll}
g^2=1, & x^2=0, & xg=-gx, & \Delta(g)=g \otimes g, & \Delta(x) = g \otimes x + x \otimes 1,\\
\epsilon(g)=1, & \epsilon(x)=0, & S(g)=g^{-1}, & S(x)=-gx.
\end{array}
\]
Let $A$ be the commutative polynomial ring $k[u]$ with left $H$-action given by
$$g \cdot u = -u \hspace{.2in}\text{and} \hspace{.2in} x \cdot u = 1.$$
It is easy to check that this action is inner-faithful. 
Moreover, the action $\cdot$ yields the right coaction of $K=H^*$ on $A$ given by 
$\rho: A \ra A \otimes K$, where 
$$\rho(u) = u \otimes (1^* - g^*) + 1 \otimes (x^* +(gx)^*).$$
Here, $\{1^*, g^*, x^*, (gx)^*\}$ is the dual basis of $K$. Note that $H$ is self-dual, so $H \cong K$ as Hopf algebras.

To define a set of right coideal subalgebras of $K$ (and thus of $H$),  consider the character $\chi: A \ra k$ defined by $\chi(u) = \alpha \in k$. Moreover, consider the morphism $\rho_{\chi}: A \ra K$ defined by 
$$\rho_{\chi}(u) = \alpha(1^* - g^*) + (x^* + (gx)^*).$$ 
Take $A_{\chi}$ to be the image of $\rho_{\chi}$, which is spanned by $1_{K}$ and $h_{\chi} := \alpha(1^* -g^*) + (x^* + (gx)^*)$.
Here, $1^* + g^*$ is the unit $1_K$ of $K$ since $1^* + g^* = \epsilon$ is the counit of $H$.

We show explicitly that the image $A_{\chi}$ is a right coideal subalgebra of $K$  for all $\alpha \in k$. First, $A_{\chi}$ is clearly a subalgebra of $K$. Secondly, recall that $\Delta(f) = \sum f(e_i e_j) e_i^* \otimes e_j^*$ for all $e_i, e_j \in H$ and $f \in K=H^*$. Let $\bar{g}:=1^*-g^*$ and $\bar{x}:=x^*+ (gx)^*$, so $h_{\chi} = \alpha \bar{g} + \bar{x}$. Moreover, $\Delta(\bar{g}) = \bar{g} \otimes \bar{g}$ and $\Delta(\bar{x}) = 1_K \otimes \bar{x} + \bar{x} \otimes \bar{g}$. 
So, we get that for all $\alpha \in k$:
$$\Delta(h_{\chi}) = \alpha(\bar{g} \otimes \bar{g}) + (1_K \otimes \bar{x} + \bar{x} \otimes \bar{g}) = h_{\chi} \otimes \bar{g} + 1_K \otimes \bar{x} \in A_{\chi} \otimes K.$$
Therefore, the Sweedler Hopf algebra has infinitely many right coideal subalgebras.
\end{example}

\medskip

On the other hand, we establish the following theorem pertaining to the number of coideal subalgebras of a semisimple Hopf algebra.

\begin{theorem} \label{thm:sscoideal} 
Let $k$ be a field of characteristic zero. Then, a semisimple Hopf algebra $K$ over $k$ has finitely many coideal subalgebras.
\end{theorem}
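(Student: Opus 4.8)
The plan is to translate the problem into the language of fusion categories and module categories, where the crucial finiteness input is Ostrik's theorem that a fusion category admits only finitely many indecomposable module categories up to equivalence. Since $k$ has characteristic zero, $K$ semisimple is equivalent to $K$ cosemisimple, so $\mc{C} := \Rep(K^*)$ --- the category of left $K^*$-modules, equivalently of finite-dimensional right $K$-comodules --- is a fusion category (it is rigid, $k$-linear, semisimple, with finitely many simple objects and $\End_{\mc C}(\mathbf 1)\cong k$). The first observation is that any right coideal subalgebra $B$ of $K$ is an algebra object in $\mc C$: the restricted comultiplication $\Delta|_B\colon B\ra B\otimes K$ makes $B$ a right $K$-comodule for which the multiplication and unit are comodule maps, and $B$ is connected in the sense that $\Hom_{\mc C}(\mathbf 1, B)=\{b\in B:\Delta(b)=b\otimes 1_K\}=k\,1_K$. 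Thus each coideal subalgebra produces a canonical object of categorical type over $\mc C$, and the goal becomes showing that only finitely many such objects occur.

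The key structural tool I would invoke is the Galois-type correspondence of Takeuchi and Masuoka: for a finite-dimensional Hopf algebra, the assignment $B\mapsto Q_B := K/KB^+$, where $B^+=B\cap\ker\epsilon$ and $KB^+$ is the left ideal it generates, is a \emph{bijection} between right coideal subalgebras $B$ of $K$ and quotient left $K$-module coalgebras of $K$. The fact that this is a genuine bijection (and not merely injective) rests on the freeness of $K$ as a $B$-module, which holds by Skryabin's freeness theorem (a Nichols--Zoeller-type result for coideal subalgebras). Consequently, counting right coideal subalgebras of $K$ is the same as counting these quotient module coalgebras $Q_B$. Each $Q_B$ carries a left $K$-action, hence the category $\mc{M}_B$ of relative $(B,K)$-Hopf modules --- equivalently $\operatorname{Comod}(Q_B)$ --- is a module category over $\mc C$, realized as a quotient of the fixed finite module category $\operatorname{Comod}(K)$; the extreme cases $B=K$ and $B=k$ recover $\Vec$ and $\operatorname{Comod}(K)$ respectively.

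To extract finiteness I would run Ostrik's theorem in two stages. First, Ostrik's classification gives only finitely many indecomposable exact module categories over $\mc C$ up to equivalence, and the functor categories $\mathrm{Fun}_{\mc C}(\mc M,\mc N)$ are finite semisimple; this bounds the $\mc{M}_B$ into finitely many equivalence classes. Second, I would upgrade this to finiteness of the actual quotient coalgebras $Q_B$ by a rigidity/dimension argument: since $\mc C$ has finitely many simple objects, it contains only finitely many isomorphism classes of objects of bounded length, and $\dim_k Q_B = \dim_k K/\dim_k B \le \dim_k K$ gives exactly such a bound. Combining the bounded-length constraint on the underlying comodule of $Q_B$ with the bijection $B\leftrightarrow Q_B$ then yields that only finitely many $B$ can arise.

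The hard part will be the passage from ``finitely many equivalence classes of module categories'' to ``finitely many literal coideal subalgebras of $K$.'' Ostrik's finiteness is a clean engine, but it is invariant only up to (Morita-type) module equivalence, and a single equivalence class can in principle correspond to many distinct subalgebras of the fixed space $K$; so the genuine work is in pinning $B$ down as an honest subquotient of $K$ rather than as an isomorphism type. This is precisely where the Galois correspondence must be used as a true bijection --- requiring the Skryabin/Nichols--Zoeller freeness and the exactness and semisimplicity of the module categories $\mc M_B$ --- together with the length bound $\dim_k Q_B\le \dim_k K$ to control the finitely many realizations within each equivalence class. I expect verifying these compatibility and finiteness-of-realization points, and not the invocation of Ostrik's theorem itself, to be the main obstacle.
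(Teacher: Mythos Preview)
Your approach shares the same engine as the paper's---Ostrik's finiteness of indecomposable module categories over a fusion category---but you route through the Takeuchi--Masuoka bijection $B\leftrightarrow Q_B=K/KB^+$ and the comodule category $\mc C=\Rep(K^*)$, whereas the paper works directly with $\mc C=\Rep(K)$ and $\mc M=\Rep(B)$. Both setups are reasonable, and you correctly diagnose that the genuine difficulty is upgrading ``finitely many module categories up to equivalence'' to ``finitely many literal coideal subalgebras of the fixed space $K$.''

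The gap is that you do not actually close this step. Your proposed argument---that $\dim_k Q_B\le\dim_k K$ bounds the length of $Q_B$ as an object of $\mc C$, hence there are finitely many isomorphism classes of underlying comodules---only controls isomorphism types, not literal quotients. A single module-coalgebra isomorphism type could a priori be realized as infinitely many distinct quotients $K\twoheadrightarrow Q$ (already for left ideals of a matrix algebra one sees continuous families), and nothing in your sketch rules this out. Invoking the bijection $B\leftrightarrow Q_B$ at the end does not help: it transports the problem from subalgebras to quotient module coalgebras without reducing it. You flag this as ``the main obstacle,'' but the length bound you offer is genuinely weaker than what is needed.

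The paper resolves this by packaging more data. It shows (its Lemma on bijections) that right coideal subalgebras $B\subseteq K$ are in bijection with equivalence classes of quadruples $(\mc M,F_{\mc M},G,\sigma)$, where $\mc M$ is a semisimple $\mc C$-module category, $G:\mc C\to\mc M$ is a surjective module functor (restriction), $F_{\mc M}:\mc M\to\Vec$ is a module functor (forgetful), and $\sigma:F_{\mc M}\circ G\cong F_{\mc C}$. The pair $(F_{\mc M},G,\sigma)$ is exactly the extra rigidification that pins down the literal embedding $B\hookrightarrow K$, via reconstruction: $B$ is recovered as the image of $\End(F_{\mc M})\to\End(F_{\mc C})=K$. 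Finiteness then comes from three inputs: Ostrik gives finitely many $\mc M$; a Grothendieck-group dimension count (solving $\dim X_i=\sum_j a_{ij}d_j$ in nonnegative integers) gives finitely many possible maps $\mathrm{Gr}(\mc C)\to\mathrm{Gr}(\mc M)$ and $\mathrm{Gr}(\mc M)\to\Z$; and semisimplicity of the functor categories $\mathrm{Fun}_{\mc C}(\mc M,\mc N)$ gives finitely many module functors realizing each such map. This last pair of steps is precisely what is missing from your outline, and it is where the work lies.
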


\begin{remark} \label{rmk:finite}
This theorem is one of the many finiteness (or ``rigidity") results for both semisimple Hopf algebras and fusion categories. These include the Ocneanu rigidity theorem (there are finitely many fusion categories with a given fusion rule) and Stefan's theorem (there are finitely many semisimple Hopf algebras of a given dimension over a field of characteristic 0). Such theorems are discussed in \cite{ENO:fusion}.
\end{remark}

To prove Theorem~\ref{thm:sscoideal}, we need the following preliminary result.

\begin{lemma} \label{lem:Bss}  \cite[Lemma~4.0.2]{Burciu:kernels} \cite[Theorem~5.2]{Skryabin:freeness}
Any left or right coideal subalgebra $B$ of a semisimple Hopf algebra $K$ is semisimple. \qed
\end{lemma}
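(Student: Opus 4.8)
The plan is to prove $J=J(B)=0$ directly, where $J$ denotes the Jacobson radical of the finite-dimensional algebra $B$; semisimplicity is equivalent to $J=0$. Throughout I use that over a field of characteristic zero a semisimple Hopf algebra $K$ is automatically cosemisimple and involutory ($S^2=\mathrm{id}$), by Larson--Radford. Consequently $K$ carries a normalized two-sided integral, i.e.\ a functional $\lambda\colon K\to k$ with $\lambda(1)=1$ and $\sum \lambda(k_1)k_2=\lambda(k)1=\sum k_1\lambda(k_2)$, and this $\lambda$ is a nondegenerate trace on $K$; identifying $K\cong\prod_i M_{n_i}(k)$, it is a combination of the matrix traces with nonzero coefficients, so in particular $\lambda$ vanishes on every nilpotent element of $K$. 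As an easy first reduction, consider the symmetric associative form $\beta(a,b)=\lambda(ab)$ on $B$: if $a\in J$ then $ab\in J$ is nilpotent in $B$, hence in $K$, so $\lambda(ab)=0$, giving $J\subseteq\mathrm{rad}(\beta)$. It would thus suffice to show $\beta$ is nondegenerate on $B$; but the restriction of the nondegenerate $\lambda$ to the subspace $B$ may a priori be degenerate, so the whole difficulty lies here and is resolved only by using the coideal structure together with a flatness input.

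The engine is a relative-semisimplicity (descent) argument, for which two facts are needed. First, $K$ is faithfully flat, indeed free, as a $B$-module; for a coideal subalgebra of a finite-dimensional Hopf algebra this is Skryabin's freeness theorem, generalizing Nichols--Zoeller. Second, there is a $(B,B)$-bimodule projection $E\colon K\to B$ with $E(1)=1$. To build $E$, set $B^+=B\cap\ker\epsilon$ and let $C=K/B^+K$ be the quotient left $K$-module coalgebra; then $K$ is a right $C$-comodule via $\rho=(\mathrm{id}\otimes\pi)\circ\Delta$, and by faithful flatness $B=K^{\mathrm{co}\,C}$. Since $C$ is a quotient coalgebra of the cosemisimple $K$, it is cosemisimple and carries a normalized integral $\mu\colon C\to k$; put $E(k)=\sum k_{(0)}\mu(k_{(1)})$. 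The integral property forces $E(k)\in K^{\mathrm{co}\,C}=B$, while $\rho(b)=b\otimes\bar 1$ for $b\in B$ gives $E|_B=\mathrm{id}$, and, because $\rho$ is an algebra map, the bimodule identities $E(bk)=bE(k)$ and $E(kb)=E(k)b$.

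With these two inputs the conclusion is formal. For any $B$-module $M$ define the $B$-linear maps $\eta_M\colon M\to K\otimes_B M$, $m\mapsto 1\otimes m$, and $\theta_M\colon K\otimes_B M\to M$, $k\otimes m\mapsto E(k)m$; unitality of $E$ gives $\theta_M\circ\eta_M=\mathrm{id}_M$. Given a $B$-submodule $N\subseteq M$, flatness makes $K\otimes_B N\hookrightarrow K\otimes_B M$ a $K$-submodule, and semisimplicity of $K$ yields a $K$-linear splitting $\sigma$ of this inclusion; then $\theta_N\circ\sigma\circ\eta_M\colon M\to N$ is a $B$-linear retraction onto $N$, using the naturality identity $\eta_M\circ\iota=(\mathrm{id}\otimes\iota)\circ\eta_N$ for the inclusion $\iota$. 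Hence every submodule of every $B$-module is a direct summand, so $B$ is semisimple. As a byproduct this shows $\beta$ is nondegenerate, i.e.\ $B$ is Frobenius with functional $\lambda|_B$, closing the gap noted above.

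The main obstacle is the first input: freeness (or faithful flatness) of $K$ over an arbitrary coideal subalgebra $B$. This is precisely the deep content cited from Skryabin, and it is used twice over—to transfer semisimplicity from $K$-modules back to $B$-modules, and to guarantee $B=K^{\mathrm{co}\,C}$ so that $E$ actually lands in $B$. Everything after that, namely extracting $E$ from the integral on $C$ and running the splitting argument, is routine. Finally, the case of a left coideal subalgebra follows by the same argument applied through the antipode (equivalently, working over $K^{\mathrm{op,cop}}$), which interchanges left and right coideal subalgebras while preserving semisimplicity.
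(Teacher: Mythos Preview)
The paper does not give its own proof of this lemma; it is stated with citations to Burciu and Skryabin and closed immediately, so there is no argument in the paper to compare against. Your overall plan---invoke Skryabin's freeness, construct a conditional expectation $E\colon K\to B$ from an integral on the quotient coalgebra, and run a Maschke-type descent---is the standard route and is close in spirit to what the cited references do.

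There is, however, a genuine gap. You claim that $E$ is a $(B,B)$-bimodule map ``because $\rho$ is an algebra map''. But $C=K/B^{+}K$ is not an algebra in general: the subspace $B^{+}K$ is only a one-sided ideal unless $B$ is normal, so the assertion that $\rho$ is an algebra map has no content. Moreover, your side conventions are off: for a \emph{right} coideal subalgebra $B$ one recovers $B$ as the coinvariants of the \emph{left} $C$-coaction $\rho_L(k)=\sum\pi(k_1)\otimes k_2$, not of the right coaction you write (with the right coaction one does not get $\rho(b)=b\otimes\bar 1$ for $b\in B$, since only the \emph{left} tensorands of $\Delta(b)$ are forced to lie in $B$). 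With the corrected setup the map $E(k)=\sum\mu(\pi(k_1))k_2$ is \emph{left} $B$-linear, because $b_1^{+}k_1\in B^{+}K$ dies under $\pi$; but \emph{right} $B$-linearity $E(kb)=E(k)b$ would require $k_1b_1^{+}\in B^{+}K$, i.e.\ $KB^{+}\subseteq B^{+}K$, which fails for non-normal coideal subalgebras. Without right $B$-linearity your map $\theta_M\colon K\otimes_B M\to M$, $k\otimes m\mapsto E(k)m$, is not even well-defined on the tensor product over $B$, and the splitting argument collapses at that point. Establishing the needed two-sided compatibility (equivalently, that $B\subseteq K$ is a split extension in the bimodule sense) is precisely part of the nontrivial content absorbed by the citations; it does not fall out of the integral on $C$ alone.
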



Now, we use the machinery of fusion categories to prove Theorem~\ref{thm:sscoideal}.

\medskip
\noindent {\it Notation.} [$B$, $\mc{C}$, $\mc{M}$, $F_{\mc{C}}$, $F_{\mc{M}}$, $G$, $\sigma$]
Let $K$ be a Hopf algebra and let $B$ be a right coideal subalgebra of $K$.  Consider the fusion category $\mc{C}:=\Rep(K)$ of left $K$-modules and  the category $\mc{M}:=\Rep(B)$ of left $B$-modules. We see that $\mc{M}$ is a right $\mc{C}$-module category as follows.  Given $X \in \mc{C}$ and $M \in  \mc{M}$, we get an action of $B$ on $M \otimes X$ by
$$b \cdot (m \otimes x) = \sum (b_{1} \cdot m) \otimes (b_{2} \cdot x),$$
where $\Delta(b) = \sum  b_{1} \otimes b_{2}$ for $b, b_{1} \in B$ and $b_{2} \in K$.
We also have that if $K$ is semisimple, then $\mc{M}$  is semisimple by Lemma~\ref{lem:Bss}.

Moreover, we have a functor $G: \mc{C} \ra \mc{M}$ defined by restriction from $K$ to $B$, which is a surjective module functor. We also have forgetful functors, $F_{\mc{C}}: \mc{C} \ra \Vec$ (a tensor functor) and $F_{\mc{M}}: \mc{M} \ra \Vec$ (a module functor), where $\Vec$ is the category of finite dimensional vector spaces over $k$. We also get an isomorphism of module functors over $\mc{C}$ given by $\sigma:  F_{\mc{M}} \circ G \overset{\sim}{\ra} F_{\mc{C}}$.
\medskip

Next, we establish a bijection between the set of right coideal subalgebras of $K$ and the set of quadruples $(\mc{M}, F_{\mc{M}}, G, \sigma)$ up to equivalence. By equivalence, we mean the equivalence relation generated by the following conditions.
\begin{enumerate}
\item[(1)] If $L: \mc{M} \ra \mc{M}'$ is an equivalence of right $\mc{C}$-module categories with quasiinverse $L^{-1}$, then $(\mc{M}, F_{\mc{M}}, G, \sigma)$ is equivalent to $(\mc{M}', F_{\mc{M}} \circ L^{-1}, L \circ G, \sigma')$ where $\sigma'$ is the corresponding isomorphism.
\item[(2)] If $G': \mc{C} \ra \mc{M}$ and $F_{\mc{M}}': \mc{M} \ra \Vec$ are other $\mc{C}$-module functors with isomorphisms $a: G \ra G'$ and $b: F_{\mc{M}} \ra F_{\mc{M}}'$, then $(\mc{M}, F_{\mc{M}}, G, \sigma)$ is equivalent to $(\mc{M}, F_{\mc{M}}', G', \sigma')$ where $\sigma'$ is the corresponding isomorphism.
\item[(3)] The quadruple $(\mc{M}, F_{\mc{M}}, G, \sigma)$ is equivalent to $(\mc{M}, F_{\mc{M}}, G, \lambda \sigma)$ with $\lambda \in k^{\times}$.
\end{enumerate}

\begin{lemma} \label{lem:bij} Retain the notation above.
Then, the following statements hold.
\begin{enumerate}
\item There is a bijection between the set of right coideal subalgebras $B$ of $K$ and the set of quadruples ($\mc{M}$, $F_{\mc{M}}$, $G$, $\sigma$) up to equivalence.
\item The module category $\mc{M}$ is indecomposable.
\end{enumerate}
\end{lemma}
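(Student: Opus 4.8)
The plan is to prove (a) by Tannaka--Krein reconstruction, realizing both $K$ and $B$ as endomorphism algebras of fiber functors, and to prove (b) by combining the surjectivity of $G$ with the simplicity of $G(\mathbf{1})$. For (a) the forward assignment $B \mapsto (\mc{M}, F_{\mc{M}}, G, \sigma)$ is exactly the construction in the Notation preceding the lemma, so the real content is to produce a well-defined inverse. First I would recall the reconstruction identity $\End(F_{\mc{C}}) \cong K$ as algebras: a natural endotransformation of the forgetful functor $F_{\mc{C}} \colon \Rep(K) \ra \Vec$ is the action of a unique element of $K$ on every module, and the monoidal structure of $F_{\mc{C}}$ recovers the comultiplication $\Delta$ of $K$. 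Given an arbitrary quadruple, I would set $B := \End(F_{\mc{M}})$; the same computation on $\mc{M} = \Rep(B)$ shows this is an algebra. Restricting an endotransformation $\eta$ of $F_{\mc{M}}$ along $G$ and transporting through $\sigma$ produces an algebra map $\End(F_{\mc{M}}) \ra \End(F_{\mc{C}}) = K$, $\eta \mapsto \sigma \circ (\eta G) \circ \sigma^{-1}$, which is injective because $G$ is surjective and $F_{\mc{M}}$ is faithful and exact, so an endotransformation vanishing on every $G(X)$ vanishes on every subquotient, hence everywhere. Crucially, the compatibility condition on the module-functor structure $s$ is precisely what forces $\Delta|_B$ to land in $B \otimes K$, i.e.\ that the image $B \hookrightarrow K$ is a \emph{right coideal subalgebra}; semisimplicity of $B$ (Lemma~\ref{lem:Bss}) guarantees $\mc{M}$ is semisimple, validating this reconstruction.

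Next I would check that the inverse is well defined on equivalence classes. A module equivalence $L$ (relation (1)) induces an isomorphism $\End(F_{\mc{M}}) \cong \End(F_{\mc{M}} \circ L^{-1})$ carrying one image in $K$ onto the other; replacing $G, F_{\mc{M}}$ by isomorphic functors (relation (2)) does not change the image subalgebra; and rescaling $\sigma \mapsto \lambda \sigma$ (relation (3)) leaves the conjugation $\eta \mapsto \sigma (\eta G) \sigma^{-1}$ unchanged. Finally I would verify that the two assignments are mutually inverse: starting from a coideal subalgebra the reconstruction returns it verbatim (the identity $\End(\text{forgetful}) = B$ together with the inclusion realized through $G$), while starting from a quadruple one uses that the faithful exact module functor $F_{\mc{M}}$ realizes $\mc{M} \simeq \Rep(\End(F_{\mc{M}})) = \Rep(B)$ as $\mc{C}$-module categories, compatibly with $G$ and $\sigma$ up to the relations (1)--(3).

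For (b), note that via $\sigma$ we have $F_{\mc{M}}(G(\mathbf{1})) \cong F_{\mc{C}}(\mathbf{1}) \cong k$, so $G(\mathbf{1})$ has one-dimensional image under the faithful exact functor $F_{\mc{M}}$ and is therefore a simple object of the semisimple category $\mc{M}$. If $\mc{M} = \mc{M}_1 \oplus \mc{M}_2$ were a nontrivial decomposition of $\mc{C}$-module categories, then $G(\mathbf{1})$ would lie in a single summand, say $\mc{M}_1$. Since $G$ is a module functor we have $G(X) \cong G(\mathbf{1}) \otimes X$ for all $X \in \mc{C}$, and $\mc{M}_1$ is closed under the $\mc{C}$-action, so every $G(X)$ lies in $\mc{M}_1$. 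Surjectivity of $G$ then places every object of $\mc{M}$, as a subquotient of some $G(X)$, in $\mc{M}_1$, forcing $\mc{M}_2 = 0$; hence $\mc{M}$ is indecomposable.

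The hard part will be the reconstruction step in (a): verifying that the coherence data $(s, m, n)$ of the module functors translate exactly into the right coideal condition $\Delta(B) \subseteq B \otimes K$, and that the realization $\mc{M} \simeq \Rep(B)$ genuinely respects $G$ and $\sigma$ up to relations (1)--(3). By comparison, the well-definedness checks and the indecomposability argument are formal.
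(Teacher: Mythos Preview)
Your proposal is correct and follows essentially the same route as the paper: the paper likewise reconstructs $B$ as the image of the map $\End(F_{\mc{M}}) \xrightarrow{G} \End(F_{\mc{M}}\circ G) \xrightarrow{\mathrm{Ad}(\sigma)} \End(F_{\mc{C}}) = K$, checks the right-coideal condition by a commutative-diagram Sublemma translating the module-functor coherence data $(J,s,r)$ into $\Delta|_B = \rho$, and proves (b) via $G(X)\cong G(\mathbf{1})\otimes X$ with $G(\mathbf{1})$ simple. The one point you flag as hard---that the coherence data force $\Delta(B)\subseteq B\otimes K$---is exactly what the paper isolates as its Sublemma, while the ``mutually inverse'' verification you also worry about is dismissed there as easy to check.
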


\begin{proof}
(a) Our job is to show that given a semisimple right $\mc{C}$-module category $\mc{M}$ and a diagram
\[
\xymatrix{
\mc{C} \ar[r]^{G} \ar@/_3pc/[rr]_{F_{\mc{C}}} 
&\mc{M} \ar[r]^{F_{\mc{M}}} \ar@{-->}[d]^{\sigma} &\Vec\\
&{} &
}
\]
\begin{center} Diagram 1 \end{center}

\noindent where $G$ and $F_{\mc{M}}$ are module functors, with $G$ surjective, and $\sigma$  an isomorphism of module functors, we can construct a unique coideal subalgebra $B$ of $K$.

We have a homomorphism $\phi$ from the algebra, $\End(F_{\mc{M}})$, of functorial endomorphisms of the functor $F_{\mc{M}}$ to the Hopf algebra $K$ defined as follows:
$$\phi: \End(F_{\mc{M}}) \overset{G}{\lra} \End(F_{\mc{M}}\circ G) \overset{Ad(\sigma)}{\lra} \End(F_{\mc{C}}) = K.$$
 The last equality holds by the reconstruction theorem for Hopf algebras \cite{JoyalStreet}. Since $G$ is surjective, the map $\phi$ is injective and the image $B$ of $\phi$ can be viewed as a subalgebra of $K$.

We see that $B$ is a coideal subalgebra of $K$ as follows. Fix elements $m \in M \in \mc{M}$ and $x \in X \in \mc{C}$. (We abuse notation by writing $M$ for $F_{\mc{M}}(M)$ and $X$ for $F_{\mc{C}}(X)$ as actions technically occur in $F_{\mc{M}}(M)$ and $F_{\mc{C}}(X)$, respectively.) Consider the coaction $\rho: B \ra B \otimes K$ defined by 
$\rho(b) = \sum b_{1} \otimes b_{2}$, where 
$$b \cdot (m \otimes x)  = \sum (b_{1} \cdot m) \otimes (b_{2} \cdot x).$$ 
This makes sense since $M \otimes X \in \Rep(B)$. Here, $\mc{M}$ is naturally identified with $\Rep(\End(F_{\mc{M}}))$ by the reconstruction theorem for associative algebras.
Moreover, the map $\rho$ is identified with the endomorphism algebra of the functor $(M,X) \mapsto F_{\mc M}(M) \otimes F_{\mc C}(X)$ on the product category $\mc{M} \times \mc{C}.$
Now, it suffices to show that $\rho = \Delta|_B$; we verify this in the following sublemma.

\begin{sublemma} \label{sublem}
The coproduct $\Delta$ of $K$ restricted to $B$ is given by the $K$-coaction $\rho$ on $B$.
\end{sublemma}

\noindent {\it Proof of Sublemma~\ref{sublem}}. Consider the following standard isomorphisms:
\[
\begin{array}{c}
J_{X,Y}: F_{\mc{C}}(X) \otimes F_{\mc{C}}(Y) \overset{\sim}{\lra} F_{\mc{C}}(X \otimes Y),\\
s_{X,Y}: G(X \otimes Y) \overset{\sim}{\lra} G(X) \otimes Y,\\
r_{M,Y}: F_{\mc{M}}(M) \otimes F_{\mc{C}}(Y) \overset{\sim}{\lra} F_{\mc{M}}(M \otimes Y)
\end{array}
\]
for all $X, Y \in \mc{C}$ and $M \in \mc{M}$. Here, $J$ is the tensor structure on $F_{\mc{C}}$, and $s$ and $r$ are the structures of a module functor on $G$ and $F_{\mc{M}}$, respectively. Also, we have the isomorphism $\sigma: F_{\mc{C}} \ra F_{\mc{M}} \circ G$.
Now, the following diagram commutes for all $X, Y \in \mc{C}$ and $M \in \mc{M}$:
\[
\xymatrix{
F_{\mc{M}}(G(X \otimes Y)) 
 \ar[d]^{\sim}_{\sigma} 
&& F_{\mc{M}}(G(X) \otimes Y)   \ar[ll]^{\sim}_{F_{\mc{M}}(s_{X,Y})}
&& F_{\mc{M}}(G(X)) \otimes F_{\mc{C}}(Y) 
\ar[ll]^{\sim}_{r_{G(X),Y}} \ar[d]^{\sim}_{\sigma} \\
F_{\mc{C}}(X \otimes Y)
&&&& F_{\mc{C}}(X) \otimes F_{\mc{C}}(Y) \ar[llll]^{\sim}_{J_{X,Y}}
}
\]
Let $b \in \End(F_{\mc{M}})$. We leave it as an exercise to show that
\begin{enumerate}
\item[(i)] the image of $b$ under conjugation by $J_{X,Y}^{-1} \circ \sigma$ is given by $\Delta(\phi(b))$; and
\item[(ii)] the image of $b$ under conjugation by
$\sigma \circ r_{G(X),Y}^{-1} \circ F_{\mc{M}}(s_{X,Y}^{-1})$ is given by $(\phi \otimes id)\rho(b)$.
\end{enumerate}
Thus, $\Delta(\phi(b)) = (\phi \otimes id) \rho(b)$, and we are done.
\smallskip

Returning to the proof, we claim that $\sigma$ is unique up to scaling. It suffices to show that any automorphism of the module functor $F_{\mc{C}}$ is a scalar. An automorphism of $F_{\mc{C}}$, as an additive functor, is just an element $h \in K$. The condition that it preserves module structure is $\Delta(h)=h \otimes 1$, which implies that $h$ is a scalar.

It is clear that $\phi$ does not change under rescaling $\sigma$. So, since $\sigma$ is unique up to scaling, $\phi$ (and hence, the image $B$ of $\phi$) is independent of $\sigma$.
Finally, it is easy to check that the assignments 
$$B \mapsto (\Rep(B), F_{\Rep(B)}, G, \sigma) 
\hspace{0.2in}\text{and} \hspace{0.2in} 
(\mc{M}, F_{\mc{M}}, G, \sigma) \mapsto \phi(\End (F_{\mc{M}}))$$ are mutually inverse.

\medskip

\noindent (b) We claim that $\mc{M}$ is an indecomposable module category.
Let $J = G({\bf 1})$, which is simple as $F_{\mc{M}}(J) = F_{\mc{C}}({\bf 1}) = k$. Let $M \in \mc{M}$ be a simple object. Since $G$ is surjective, $M$ is contained in $G(X)$ for some $X \in \mc{C}$.  Moreover, 
$$G(X) = G({\bf 1} \otimes X) = G({\bf 1})  \otimes X =  J \otimes X.$$ 
So, $M$ is contained in $J \otimes X$.  In other words, all simple objects $M$ of $\mc{M}$ are accessible from $J$. Hence, $\mc{M}$ is indecomposable.
\end{proof}

To prove Theorem~\ref{thm:sscoideal}, we will also need the following proposition.

\begin{proposition} \label{pro:finite} Let $\mc{C}$ be a fusion category.
\begin{enumerate}
\item There are finitely many semisimple indecomposable module categories $\mc{M}$ over  $\mc{C}$.
\item If $\mc{M}$ and $\mc{N}$ are semisimple finite module categories over $\mc{C}$, then there are finitely many module functors $F: \mc{M} \ra \mc{N}$ up to isomorphism, inducing a given map of Grothendieck groups Gr($\mc{M}$)$\ra$Gr($\mc{N}$).
\end{enumerate}
\end{proposition}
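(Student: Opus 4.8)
The plan is to handle the two parts by a common ``boundedness plus rigidity'' philosophy, but to reduce part (b) to a purely combinatorial count that avoids deformation theory. For part (a), I would first attach to a semisimple indecomposable module category $\mc{M}$ over $\mc{C}$ its \emph{module fusion rules}: fixing representatives $\{X_a\}$ of the simple objects of $\mc{C}$ and $\{M_i\}$ of the simple objects of $\mc{M}$, these are the nonnegative integer matrices $N_a = \left(\dim \Hom_{\mc{M}}(M_i \otimes X_a,\; M_j)\right)_{i,j}$. The identity $M_i \otimes (X_a \otimes X_b) \cong (M_i \otimes X_a)\otimes X_b$ makes $(N_a)$ a based module over the fusion ring $\mathrm{Gr}(\mc{C})$, and indecomposability of $\mc{M}$ makes this based module indecomposable.

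The first task is then to bound this data. Writing $R = \sum_a \mathrm{FPdim}(X_a)[X_a]$ for the regular element of $\mathrm{Gr}(\mc{C})$, indecomposability forces $N_R := \sum_a \mathrm{FPdim}(X_a)\,N_a$ to be an irreducible nonnegative matrix whose Perron--Frobenius eigenvalue is $\mathrm{FPdim}(\mc{C})$ and whose Perron eigenvector records the numbers $\mathrm{FPdim}(M_i)$. Normalizing $\min_i \mathrm{FPdim}(M_i)=1$, standard Frobenius--Perron bookkeeping bounds both the number of simple objects of $\mc{M}$ and the entries $N_{ia}^{\,j}$ purely in terms of invariants of $\mc{C}$, so there are only finitely many module fusion rules. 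The second, deeper task is \emph{Ocneanu rigidity}: for a fixed module fusion rule, the module associativity constraints $m_{M,X,Y}$ satisfying the pentagon diagram of Definition~\ref{def:modulecat} form an affine variety of finite type (the unknowns are the finitely many matrix entries of the $m$'s on simple objects), on which natural isomorphisms of the action bifunctor act as gauge transformations. The relevant deformation cohomology of this module structure vanishes in characteristic $0$ by semisimplicity, so the gauge orbits are isolated; a finite-type variety with isolated orbits has only finitely many of them, whence finitely many module categories realize each fusion rule. I would cite \cite{ENO:fusion} for this vanishing. Combining the two tasks yields part (a).

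For part (b), I would bypass deformation theory entirely and use the structural fact that $\mathrm{Fun}_{\mc{C}}(\mc{M},\mc{N})$ is a finite semisimple $k$-linear category, with only finitely many simple module functors $P_1,\dots,P_m$ up to isomorphism (again from the theory of module categories over fusion categories, \cite{ENO:fusion}). Any module functor $F$ decomposes as $F \cong \bigoplus_{l} c_l P_l$ with $c_l \in \Z_{\geq 0}$, and since this is in particular an isomorphism of underlying functors, the induced map on Grothendieck groups satisfies $\bar{F} = \sum_l c_l \bar{P_l}$. Now fix $\bar F$. Each $P_l$ is a nonzero functor, so there is a simple $M_{i(l)} \in \mc{M}$ with $P_l(M_{i(l)}) \neq 0$; evaluating the decomposition at $M_{i(l)}$ and counting total composition length gives that the length of $F(M_{i(l)})$ is $\sum_{l'} c_{l'}\,\mathrm{length}\big(P_{l'}(M_{i(l)})\big) \geq c_l$, because every term is nonnegative and the $l$-th contributes length at least $c_l$. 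But the length of $F(M_{i(l)})$ is the sum of the coefficients of $\bar{F}([M_{i(l)}])$ in the basis of simples of $\mc{N}$, which is determined by $\bar F$. Hence $0 \leq c_l \leq \max_i (\text{sum of coefficients of } \bar{F}([M_i]))$ for every $l$, so only finitely many tuples $(c_l)$ occur, and therefore only finitely many $F$ up to isomorphism induce the given $\bar F$.

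The main obstacle is the rigidity input in part (a): the vanishing of the module-category deformation cohomology, i.e. Ocneanu rigidity, which is what upgrades ``finitely many fusion rules'' to ``finitely many categories'' and is the only genuinely nonformal ingredient in the argument. By contrast, the boundedness step is routine Frobenius--Perron estimation, and part (b) is essentially a counting argument once the finite semisimplicity of $\mathrm{Fun}_{\mc{C}}(\mc{M},\mc{N})$ is granted.
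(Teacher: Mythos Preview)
Your proposal is correct and follows essentially the same approach as the paper: the paper's proof simply cites \cite{ENO:fusion} for part (a) (Ocneanu rigidity together with the Frobenius--Perron boundedness you sketch) and, for part (b), invokes exactly the fact you use, namely that $\mathrm{Fun}_{\mc{C}}(\mc{M},\mc{N})$ is semisimple with finitely many simple objects. Your write-up is a faithful unpacking of those citations, with the explicit multiplicity bound in (b) making precise the one-line deduction the paper leaves implicit.
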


\begin{proof}
Part (a) follows from \cite[Corollary~2.35]{ENO:fusion}.  Since Fun$_{\mc{C}}$($\mc{M},\mc{N}$) is a semisimple abelian category with finitely many simple objects, part (b) holds 
\cite[Theorem~2.16]{ENO:fusion}.
\end{proof}

\medskip

\noindent {\it Proof of Theorem \ref{thm:sscoideal}}.
 By extension of the ground field, we may assume that $k$ is algebraically closed.
Consider the category of left $K$-modules $\mc{C} = \Rep(K)$, which is a fusion category as $K$ is semisimple.  By Proposition~\ref{pro:finite}(a), there are finitely many semisimple indecomposable module categories $\mc{M}$ over $\mc{C}$. So by Lemma~\ref{lem:bij}(a,b), we need to show that for all such $\mc{M}$, there are finitely many choices of $G$ and $F_{\mc{M}}$ as in Diagram~1. 

Let $\{X_i\}$ be the simple objects of $\mc{C}$ and let $\{M_j\}$ be the simple objects of $\mc{M}$. Then, the map of Grothendieck groups from $\mc{C}$ to $\mc{M}$ is determined by $G(X_i) = \bigoplus_j a_{ij} M_j$.  Since $G$ is surjective, for every $j$, there is an $i$ such that $a_{ij} >0$. Similarly, we have that $F_{\mc{M}}(M_j) = k^{d_j}$ for some $d_j > 0$. Hence, the equations $\dim_k X_i = \sum_j a_{ij} d_j$ have finitely many suitable solutions $(a_{ij}, d_j)$. So, there are finitely many suitable maps between Grothendieck groups from Gr($\mc{C}$) to Gr($\mc{M}$) and from Gr($\mc{M}$) to Gr($\Vec$)=$\mathbb{Z}$. Since $\mc{C}$, $\mc{M}$, and $\Vec$ are all semisimple finite module categories over $\mc{C}$, Proposition~\ref{pro:finite}(b) then implies that there are finitely many choices for both $G$ and $F_{\mc{M}}$. 
\qed
\medskip

Additionally, we have a result that is easily obtained from Theorem \ref{thm:sscoideal}.

\medskip

\noindent {\it Notation}. [CS$_d(K)$] Given a finite dimensional Hopf algebra $K$, let CS$_d(K)$ denote the variety of coideal subalgebras of $K$ of dimension $d$. 
\medskip

\begin{corollary} \label{cor:sscodieal'} 
Let $K$ be a finite dimensional Hopf algebra and let Gr$_d(K)$ be the Grassmannian of $d$-dimensional subspaces of $K$. 
Then,
CS$_d(K)$ is a closed subvariety of Gr$_d(K)$.
If $K$ semisimple, then CS$_d(K)$  consists of finitely many points. 
\end{corollary}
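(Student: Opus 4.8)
The plan is to prove Corollary~\ref{cor:sscodieal'} in two parts, corresponding to its two assertions. First I would show that $\text{CS}_d(K)$ is a closed subvariety of the Grassmannian $\text{Gr}_d(K)$, which is purely scheme-theoretic and does not require semisimplicity. The idea is that being a $d$-dimensional coideal subalgebra is cut out by closed conditions on a point $[B] \in \text{Gr}_d(K)$, where $B \subseteq K$ is the corresponding $d$-dimensional subspace. Specifically, $B$ must (i) contain the unit $1_K$, (ii) be closed under multiplication, i.e. $m(B \otimes B) \subseteq B$, and (iii) satisfy the coideal condition $\Delta(B) \subseteq B \otimes K$. Each of these is a closed condition: over the tautological subbundle $\mathcal{S} \subseteq \text{Gr}_d(K) \times K$ whose fiber over $[B]$ is $B$, the maps $m$ and $\Delta$ induce bundle morphisms, and the requirement that $m(\mathcal{S} \otimes \mathcal{S})$ and the image of $\Delta|_{\mathcal{S}}$ land in $\mathcal{S} \otimes K$ (respectively $\mathcal{S}$) is expressed by the vanishing of the composite maps to the quotient bundles $K/\mathcal{S}$. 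These vanishing loci are closed, so their intersection $\text{CS}_d(K)$ is closed.

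For the second assertion, I would invoke Theorem~\ref{thm:sscoideal}: when $K$ is semisimple, $K$ has only finitely many coideal subalgebras in total, hence only finitely many of dimension $d$. Thus the underlying set of $\text{CS}_d(K)$ is finite. Combined with the fact from the first part that $\text{CS}_d(K)$ is a closed subvariety of the projective variety $\text{Gr}_d(K)$, a finite set of points is what remains; so $\text{CS}_d(K)$ consists of finitely many points. (One should remark that Theorem~\ref{thm:sscoideal} is stated for characteristic zero; I would either restrict the second claim to that setting or note the hypothesis is inherited, matching the theorem it relies on.)

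I expect the main obstacle to be a clean formulation of the closedness in part one, specifically handling the multiplication and comultiplication conditions functorially over the Grassmannian. The cleanest route is to work with the universal/tautological subbundle and express the three defining conditions as vanishing of explicit morphisms of vector bundles, so that $\text{CS}_d(K)$ is the common zero scheme. One subtlety worth flagging is that I only impose the \emph{right} coideal condition $\Delta(B) \subseteq B \otimes K$ together with subalgebra-with-unit conditions; closedness under the antipode is not required for a coideal subalgebra, so I would not need to add it. A second point requiring a line of care is that a subalgebra is automatically required to contain $1_K$ in this paper's convention, so I must include condition (i) to match Definition~\ref{def:coidsubalg}.

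In summary, the proof is short: part one is a standard ``the locus of subspaces stable under given linear/bilinear operations is closed in the Grassmannian'' argument applied to $m$ and $\Delta$, and part two is an immediate consequence of the finiteness Theorem~\ref{thm:sscoideal}. The only genuine work is setting up the tautological bundle and the quotient maps carefully enough that the reader sees each condition is closed; everything else is formal.
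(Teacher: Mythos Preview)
Your proposal is correct and follows exactly the paper's approach: the paper's proof simply asserts that being a coideal and being a subalgebra are closed conditions (your tautological-bundle argument spells this out in more detail than the paper bothers with), and then invokes Theorem~\ref{thm:sscoideal} for the finiteness in the semisimple case. Your added care about the unit and the characteristic-zero hypothesis is accurate but not something the paper dwells on.
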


\begin{proof}
For a subspace of $K$ to be a coideal and a subalgebra of $K$ are closed conditions, so the first statement is clear. The second statement follows directly from Theorem~\ref{thm:sscoideal}.
\end{proof}


\section{Proof of Theorem \ref{thm:main}} \label{sec:proofmain}

This section is dedicated to the proof of our main theorem; see Theorem~\ref{thm:mainK} below. We also discuss the various ways this result fails if its hypotheses are amended; see Remarks~\ref{rmk:Sweedler} and~\ref{rmk:nilpotents}.

\begin{theorem} \label{thm:mainK}
If a semisimple Hopf algebra $K$ over $k$ coacts inner faithfully on a commutative domain $A$ over $k$, then $K$ is itself commutative. Thus, the coaction of $K$ reduces to the action of a finite group on $A$.
\end{theorem}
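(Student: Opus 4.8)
The plan is to study how the right coideal subalgebras $A_{\chi}$ vary as $\chi$ ranges over $X:=\Specm(A)$, and to show that they are all contained in a single \emph{commutative} coideal subalgebra of $K$, which inner faithfulness will then force to be all of $K$. The one preliminary observation driving everything is that $\rho_{\chi}=(\chi\otimes id)\circ\rho:A\to K$ is an algebra homomorphism out of the commutative algebra $A$; hence each $A_{\chi}=\rho_{\chi}(A)$ is not merely a right coideal subalgebra (Lemma~\ref{lem:Achi}) but a \emph{commutative} one.

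First I would reduce to the case that $A$ is finitely generated. Since $K$ is finite dimensional, $A$ is the directed union of its finitely generated $\rho$-costable subalgebras; and since $K$ is semisimple it has only finitely many Hopf subalgebras (Theorem~\ref{thm:sscoideal}), so by choosing, for each proper Hopf subalgebra $K'\subsetneq K$, an element $a\in A$ with $\rho(a)\notin A\otimes K'$, one finds a single finitely generated $\rho$-costable subdomain $A_0\subseteq A$ that still carries an inner faithful coaction. Replacing $A$ by $A_0$, we may assume $A$ is a finitely generated commutative domain, so that $X=\Specm(A)$ is an \emph{irreducible} affine variety.

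The heart of the argument is then short. By Theorem~\ref{thm:sscoideal}, the collection $\{A_{\chi}:\chi\in X\}$ is a finite set $\{B_1,\dots,B_m\}$ of commutative coideal subalgebras. Fix algebra generators $a_1,\dots,a_n$ of $A$. Because $\rho_{\chi}$ is an algebra map and each $B_i$ is a subalgebra, one has $A_{\chi}\subseteq B_i$ if and only if $\rho_{\chi}(a_j)\in B_i$ for $j=1,\dots,n$; as the coordinates of $\rho_{\chi}(a_j)\in K$ are regular functions of $\chi$, this is a Zariski-closed condition, so $Z_i:=\{\chi\in X:A_{\chi}\subseteq B_i\}$ is closed. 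Every $\chi$ lies in some $Z_i$, so $X=\bigcup_{i=1}^m Z_i$; since $X$ is irreducible, $X=Z_{i_0}$ for a single index $i_0$. Writing $B:=B_{i_0}$, this says $A_{\chi}\subseteq B$ for \emph{all} $\chi\in X$. I expect this step to be the main obstacle, not because the manipulation is hard, but because it is exactly where the two essential hypotheses meet: irreducibility of $X$ (the \emph{domain} assumption) is what collapses the finite family—finite precisely by the rigidity Theorem~\ref{thm:sscoideal}—down to one subalgebra, and getting that collapse right is the crux.

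It remains to assemble the pieces. From $A_{\chi}\subseteq B$ for all $\chi$ we get $L_A=\sum_{\chi}A_{\chi}\subseteq B$, and since $B$ is a subalgebra this gives $\langle L_A\rangle\subseteq B$. On the other hand, by Lemma~\ref{lem:L_A} the coaction factors as $\rho(A)\subseteq A\otimes L_A\subseteq A\otimes\langle L_A\rangle$ with $\langle L_A\rangle$ a Hopf subalgebra, so inner faithfulness of the $K$-coaction forces $\langle L_A\rangle=K$. Combining, $K\subseteq B\subseteq K$, whence $K=B$; and as $B$ is one of the commutative subalgebras $B_i$, we conclude that $K$ is commutative. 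Finally, a commutative semisimple Hopf algebra over the algebraically closed field $k$ of characteristic zero is the function algebra $k^{G}=\mathcal{O}(G)$ of a finite group $G$, so the $K$-coaction on $A$ is exactly an action of $G$ (equivalently, an action of the group algebra $K^{*}=kG$), as claimed.
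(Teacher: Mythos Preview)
Your proof is correct and follows the same overall strategy as the paper: use the finiteness of right coideal subalgebras of the semisimple $K$ (Theorem~\ref{thm:sscoideal}) together with the irreducibility of $X=\Specm(A)$ to trap every $A_{\chi}$ inside a single commutative coideal subalgebra $B$, then invoke Lemma~\ref{lem:L_A} and inner faithfulness to get $K=\langle L_A\rangle\subseteq B$, hence $K=B$ is commutative.

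The one place you diverge from the paper is in the ``constancy'' step, and your variant is a bit cleaner. The paper restricts to the open stratum $X_0\subseteq X$ where $\dim_k A_{\chi}$ is maximal, argues that the map $\chi\mapsto A_{\chi}$ is a regular morphism from $X_0$ to the finite variety $\mathrm{CS}_{d_0}(K)$ (this requires a small argument bounding degrees of generators in $K$), concludes constancy on $X_0$, and then extends to all of $X$ by density. You instead write $X$ as the finite union of the closed sets $Z_i=\{\chi:A_{\chi}\subseteq B_i\}$ and apply irreducibility directly; this bypasses the Grassmannian regularity check entirely. Both arguments use exactly the same ingredients, but yours packages them more economically. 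Your reduction to the finitely generated case (choosing witnesses against the finitely many proper Hopf subalgebras, finite by Theorem~\ref{thm:sscoideal}) is also a legitimate alternative to the paper's directed-union argument.
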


\begin{proof}
First, let us reduce to the case where $A$ is finitely generated. Any  $K$-comodule algebra $A$ is a union of finitely generated subalgebras invariant under this coaction. We see this as follows. Let $a \in A$, so $\rho(a) = \sum a_i \otimes h_i$ for $a_i \in A$ and $h_i \in K$. Let $C(a)$ be the $k$-linear span of $\{a_i\}$. Then, $C(a)$ contains $a$ as $a= \sum \epsilon(h_i) a_i$. Moreover, $C(a)$ is a finite dimensional $K$-subcomodule of $A$. So, the algebra $A(a) \subseteq A$ generated by $C(a)$ is a finitely generated right $K$-comodule subalgebra of $A$ containing $a$. Thus, $A$ is the union of all $A(a)$, which are finitely generated $K$-comodule algebras.

So, assume that $A$ is finitely generated and let $X = \Specm(A)$, whose closed points consist of characters $\chi:A \ra k$ of $A$. Then, $X$ is an irreducible affine algebraic variety over $k$. We have a map 
$$\gamma: X \lra \bigsqcup_d \text{CS}_d(K)$$
defined by $\gamma(\chi)=A_{\chi}$ (see the notation from Section~\ref{sec:coideal}).

Let $d_0 = \max_{\chi \in X} \dim_k A_{\chi}$. Consider the set $$X_0 = \{\chi \in X ~|~ \dim_k A_{\chi} = d_0\},$$ which is non-empty. The dimension map $g:X \ra [0, d_0]$ given by $g(\chi) = \dim_k A_{\chi}$ is lower semi-continuous, 
so $X_0$ is also an open subset of $X$. Thus, $X_0$ is irreducible.

Now, we show that the map $\gamma|_{X_0}$ is regular. Take $a_1, \dots, a_m$ to be generators of $A$. Let $n$ be the dimension of $K$ and let $A(n)$ be the $k$-span of monomials $a_1^{i_1} \cdots a_m^{i_m}$ where $0 \leq i_1, \dots, i_m < n$. Since any element $x$ of $K$ satisfies a monic polynomial equation of degree $n$ (namely, the characteristic polynomial of the linear operator given by left multiplication of $x$), we have that $\rho_{\chi} (A(n)) = \rho_{\chi} (A)  = A_{\chi}$. Moreover,  the map $f:X_0 \ra \Hom_k(A(n), K)$ given by $f(\chi) = \rho_{\chi}|_{A(n)}$ is regular with the rank of $f(\chi)$ constant (independent of $\chi$). Since $\rho_{\chi}(A) = \rho_{\chi}(A(n))$, we have that im($f(\chi)$) = $\gamma(\chi)$ for any $\chi \in X_0$. This implies that $\gamma|_{X_0}$ is regular.

Since CS$_{d_0}(K)$ is finite by Theorem~\ref{thm:sscoideal}, $X_0$ is irreducible, and $\gamma|_{X_0}$ is regular, we have that $\gamma|_{X_0}$ is constant. In other words for all $\chi \in X_0$, we have that $\gamma(\chi) =B$ for some coideal subalgebra $B$ of $K$ whose dimension is maximal among the dimensions of the $A_{\chi}$. We see that $A_{\chi} \subseteq B$ for all $\chi \in X$ as follows. Let $\beta \in B^{\perp}$ and $a \in A$. Then, $\beta(\rho_{\chi}(a))$ is a regular function with respect to $\chi$, and it is zero for $\chi \in X_0$. Hence, this function is identically zero since $X_0$ is dense in $X$. Hence, $\rho_{\chi}(A) \subseteq B$ as claimed.

On the other hand, consider $ L_A$, the $k$-linear span of the coideal subalgebras $A_{\chi}$ of $K$; refer to Lemma~\ref{lem:L_A}. Since $A_{\chi} \subseteq B$ for all $\chi \in X$, we have that $L_A  \subseteq B$. Also, $B = A_{\chi}$ for some $\chi \in X_0$. Hence, $B = L_A$. So, $B$ equals the subalgebra  $\langle L_A \rangle$ generated by $L_A$, which is also a Hopf subalgebra of $K$ (Lemma~\ref{lem:L_A}(b)).

Again by Lemma~\ref{lem:L_A}(b), the coaction of $K$ on $A$ restricts to the coaction of $\langle L_A \rangle$ on $A$. By inner faithfulness, no proper Hopf subalgebra of $K$ can coact on $A$, so $\langle L_A \rangle =K$. Since $B =L_A= A_{\chi}$ for some $\chi \in X_0$ and $A_{\chi}$ is commutative, we have that $K = \langle L_A \rangle$ is commutative as desired.

The second statement of the theorem is clear.
\end{proof}

The following remarks illustrate how Theorem~\ref{thm:mainK} fails if one of its hypotheses is altered.
 
\begin{remark} \label{rmk:Sweedler}
The proof of Theorem~\ref{thm:mainK} fails if $K$ has infinitely many coideal subalgebras. In this case, $K$ must be nonsemisimple by Theorem~\ref{thm:sscoideal}. For example, consider the coaction of the (dual of the) Sweedler Hopf algebra $K$ on $A = k[u]$ from Example~\ref{ex:Sweedler}. Note that $K$ is a 4-dimensional vector space spanned by $1_K, \bar{g}, \bar{x}, \overline{gx}$, where $\bar{g}:= 1^* - g^*$ and $\bar{x}:=x^* + (gx)^*$.

Recall that the coideal subalgebras $A_{\chi}$ of $K$ are of the form 
$$\{ A_{\chi} \} = \{\langle  1_K , h_{\chi} := \alpha \bar{g} + \bar{x} \rangle ~|~ \alpha \in k\},$$
which all have $k$-vector space dimension 2.
The $k$-linear span, $L_A$, is spanned by $1_K, \bar{g}, \bar{x}$, which is a 3-dimensional $k$-vector space. However, $K = \langle L_A \rangle$ is 4-dimensional. Observe that each $A_{\chi}$ is commutative, but these coideal subalgebras do not commute with each other. Hence, $K$ is noncommutative.
\end{remark}

\begin{remark} \label{rmk:nilpotents}
Theorem~\ref{thm:mainK} also fails if $A$ is not a domain. First, let $K = kS_3$, the group algebra of the symmetric group $S_3$, with $s_1 := (12)$ and $s_2:=(23)$. Let $A = k[u_1, u_2]/(u_1^2, u_1u_2, u_2^2)$. Here, $A$ contains nonzero nilpotents, yet $A$/Rad($A$) is a domain. Although $K$ is noncommutative, we can define an inner faithful coaction of $K$ on $A$ by $\rho(u_i) = u_i \otimes s_i$ for $i = 1,2.$

Secondly, let $K$ be as above, and let $A'=k[u_1,u_2]/(u_1 u_2)$. The algebra $A'$ has zero divisors, yet no nonzero nilpotents. Consider the $K$-coaction on $A'$ given by $\rho(u_i) = u_i \otimes s_i$ for $i=1,2$. Again, the $K$-coaction on $A'$ is inner faithful, but $K$ is noncommutative.
\end{remark}

\section{Consequences and further directions} \label{sec:directions}

Here, we discuss consequences of Theorem~\ref{thm:main}, which include versions of the main theorems for $k$ a field of characteristic $p>0$ and a study of Hopf actions on Weyl algebras. We also present further directions of this work in the last subsection.


\subsection{Results for $k$ a field of positive characteristic} \label{ssec:charp}

We can generalize the main results of this work for when the field $k$ is algebraically closed of characteristic $p>0$. 

\begin{theorem} \label{thm:mainKcharp} 
Theorem~\ref{thm:mainK} holds if the algebraically closed field $k$ has characteristic $p>0$ and $K$ is a semisimple and cosemisimple Hopf algebra.
\end{theorem}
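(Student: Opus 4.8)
The plan is to observe that the proof of Theorem~\ref{thm:mainK} is almost entirely characteristic-free, and to isolate the single place where characteristic zero was genuinely used. First I would note that the reduction to finitely generated $A$, the passage to the variety $X = \Specm(A)$, the lower semicontinuity of $\chi \mapsto \dim_k A_\chi$, the regularity and hence constancy of $\gamma|_{X_0}$, and the final argument with $L_A$ and inner faithfulness all go through verbatim over any algebraically closed field: they rely only on the Nullstellensatz, the irreducibility of $X$, and the formal properties of the coideal subalgebras $A_\chi$ established in Lemmas~\ref{lem:Achi} and~\ref{lem:L_A}, which are purely algebraic. The only ingredient special to characteristic zero is the finiteness of CS$_{d_0}(K)$ supplied by Theorem~\ref{thm:sscoideal} through Corollary~\ref{cor:sscodieal'}. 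Thus the entire task reduces to proving the positive-characteristic analogue of Theorem~\ref{thm:sscoideal}: a semisimple and cosemisimple Hopf algebra $K$ over $k$ has finitely many coideal subalgebras.

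To prove this finiteness statement in characteristic $p$, I would lift to characteristic zero. By the lifting theorem of Etingof--Gelaki for semisimple and cosemisimple Hopf algebras in positive characteristic, $K$ arises by reduction modulo $p$ from a semisimple Hopf algebra $\widetilde{K}$ defined over a field of characteristic zero, and this lift induces an equivalence of fusion categories $\Rep(K) \simeq \Rep(\widetilde{K})$ compatible with the forgetful (fiber) functors to $\Vec$. The hypothesis that $K$ is both semisimple \emph{and} cosemisimple is exactly what makes $\Rep(K)$ a fusion category and what powers the lifting; this is why cosemisimplicity, automatic in characteristic zero, must be imposed here.

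With the lift in hand, I would transport the classification of coideal subalgebras rather than the subalgebras themselves. Lemma~\ref{lem:bij} identifies the set of right coideal subalgebras of $K$ with the set of quadruples $(\mc{M}, F_{\mc{M}}, G, \sigma)$ up to equivalence, where $\mc{M}$ is a semisimple indecomposable module category over $\mc{C} = \Rep(K)$ equipped with a surjective module functor $G$ and a fiber functor $F_{\mc{M}}$. Since $\Rep(K) \simeq \Rep(\widetilde{K})$ as fusion categories with fiber functors, this data is in bijection with the corresponding data over $\Rep(\widetilde{K})$, hence with the right coideal subalgebras of $\widetilde{K}$. As $\widetilde{K}$ lives in characteristic zero, Theorem~\ref{thm:sscoideal} shows that it has finitely many coideal subalgebras, and the bijection forces $K$ to have finitely many as well. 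Feeding this back into the characteristic-free skeleton of the proof of Theorem~\ref{thm:mainK} then yields the result.

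The hard part will be justifying the categorical transfer. Concretely, I must verify that Lemma~\ref{lem:Bss} still holds in characteristic $p$ under the semisimple-and-cosemisimple hypothesis, so that $\mc{M} = \Rep(B)$ is genuinely semisimple and Lemma~\ref{lem:bij} applies; and I must confirm that the Etingof--Gelaki lift produces an honest equivalence of the module-category data, not merely a match of Grothendieck rings, so that the finite set of quadruples over $\Rep(\widetilde{K})$ really corresponds to that over $\Rep(K)$. An alternative to this categorical route would be to spread CS$_d$ into a family over the lifting base and compare its fibers, but there the obstacle is that special fibers of a proper family can jump up in dimension, so one would need flatness of the coideal-subalgebra scheme over the base --- which is itself the delicate point. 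I therefore expect the argument to run most cleanly through the equivalence of fusion categories and Lemma~\ref{lem:bij}, with the positive-characteristic form of Lemma~\ref{lem:Bss} as the principal technical hurdle.
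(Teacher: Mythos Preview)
Your reduction is exactly right: the proof of Theorem~\ref{thm:mainK} is characteristic-free except for the input of Theorem~\ref{thm:sscoideal}, so everything hinges on showing that a semisimple and cosemisimple $K$ in characteristic $p$ has only finitely many coideal subalgebras. Where you diverge from the paper is in how you establish that finiteness.

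The paper does not lift $K$ itself. Instead it observes that Proposition~\ref{pro:finite} --- the ENO finiteness of indecomposable module categories and of module functors with prescribed Grothendieck image --- extends to positive characteristic whenever the fusion category $\mc{C}$ is nondegenerate, citing \cite[Section~9, Theorem~9.3]{ENO:fusion}. Since $\Rep(K)$ is nondegenerate precisely when $K$ is semisimple and cosemisimple, the proof of Theorem~\ref{thm:sscoideal} then runs verbatim over $k$: Lemma~\ref{lem:bij} reduces coideal subalgebras to quadruples $(\mc{M},F_{\mc{M}},G,\sigma)$, and the positive-characteristic Proposition~\ref{pro:finite} bounds these. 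No transport across characteristics is needed at the level of $K$ or its coideal subalgebras.

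Your route --- lift $K$ to $\widetilde{K}$ via Etingof--Gelaki, invoke Lemma~\ref{lem:bij} on both sides, and match the quadruples through the equivalence $\Rep(K)\simeq\Rep(\widetilde{K})$ --- is also viable, but the step you flag as delicate (that the lift induces a bijection on module categories and module functors, not just on Grothendieck data) is exactly the content of \cite[Section~9]{ENO:fusion}. So you are effectively unpacking the citation the paper makes, rather than avoiding it; the paper's approach is shorter because it invokes that machinery once, at the level of Proposition~\ref{pro:finite}, and then stays entirely in characteristic $p$. Both approaches require the positive-characteristic validity of Lemma~\ref{lem:Bss}, which you correctly isolate as a needed ingredient.
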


\begin{proof}
In Section~9 of \cite{ENO:fusion}, it is explained that Proposition~\ref{pro:finite} extends to positive characteristic if the fusion category $\mc{C}$ is nondegenerate (see e.g. \cite[Theorem~9.3]{ENO:fusion}). This is the case if $\mc{C} = \Rep(K)$, where $K$ is a semisimple and cosemisimple Hopf algebra over $k$. The rest of the proof is the same as in characteristic zero.
\end{proof}

By similar arguments, we also have the following theorem. 

\begin{theorem} \label{thm:sscoidealcharp}
Theorem~\ref{thm:sscoideal} holds  if the algebraically closed field $k$ has characteristic $p>0$ and $K$ is a semisimple and cosemisimple Hopf algebra.
\end{theorem}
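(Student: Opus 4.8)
The plan is to run the proof of Theorem~\ref{thm:sscoideal} essentially verbatim, isolating the places where characteristic zero is used and substituting their positive-characteristic analogues. Recall that the char-zero argument has three ingredients: the semisimplicity of coideal subalgebras (Lemma~\ref{lem:Bss}), the bijection between right coideal subalgebras of $K$ and quadruples $(\mc{M}, F_{\mc{M}}, G, \sigma)$ up to equivalence (Lemma~\ref{lem:bij}), and the two finiteness statements for fusion categories (Proposition~\ref{pro:finite}). The bijection of Lemma~\ref{lem:bij} rests only on the reconstruction theorems for associative algebras and for Hopf algebras, together with formal manipulations of module functors; these are insensitive to the characteristic of $k$, so that part carries over unchanged. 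The task therefore reduces to re-establishing the finiteness inputs and the semisimplicity of $\mc{M} = \Rep(B)$ in characteristic $p$.

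First I would verify that $\mc{C} = \Rep(K)$ is a fusion category of the required kind. Since $K$ is semisimple, $\Rep(K)$ is finite and semisimple; the new subtlety in char $p$ is that one must additionally ensure $\mc{C}$ is \emph{nondegenerate}, i.e.\ that its global dimension is nonzero in $k$. This is exactly where cosemisimplicity enters: for $K$ both semisimple and cosemisimple the categorical dimensions of the simple $K$-modules do not vanish and the global dimension, governed by the (co)semisimplicity, is nonzero, so $\Rep(K)$ is nondegenerate. This is the positive-characteristic replacement for the Larson--Radford condition available in char zero.

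With nondegeneracy established, I would invoke the positive-characteristic versions of the finiteness theorems, precisely as in the proof of Theorem~\ref{thm:mainKcharp}. As noted there, Section~9 of \cite{ENO:fusion} (in particular \cite[Theorem~9.3]{ENO:fusion}) extends Proposition~\ref{pro:finite}(a,b) --- finitely many semisimple indecomposable module categories, and finitely many module functors inducing a fixed map of Grothendieck groups --- to any nondegenerate fusion category. Granting these, the Grothendieck-group counting argument of the char-zero proof applies without modification: the dimension equations $\dim_k X_i = \sum_j a_{ij} d_j$ have finitely many solutions, bounding the maps $\mathrm{Gr}(\mc{C}) \to \mathrm{Gr}(\mc{M})$ and $\mathrm{Gr}(\mc{M}) \to \mathrm{Gr}(\Vec) = \Z$, and hence bounding the choices of $G$ and $F_{\mc{M}}$. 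Summing over the finitely many $\mc{M}$ then yields finitely many coideal subalgebras.

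The main obstacle I anticipate is not the categorical rigidity (which is imported as a black box from \cite{ENO:fusion}) but the semisimplicity of the coideal subalgebra $B$, used in Lemma~\ref{lem:bij} to guarantee that $\mc{M} = \Rep(B)$ is a semisimple module category. The references underlying Lemma~\ref{lem:Bss} (\cite{Burciu:kernels, Skryabin:freeness}) must be checked for validity in char $p$; the cleanest route is Skryabin's freeness theorem, which holds over an arbitrary field and gives that $K$ is free as a $B$-module, whence $B$ inherits semisimplicity once $K$ is semisimple. Provided this step survives for $K$ semisimple and cosemisimple --- which I expect, since the relevant separability is controlled by the nonvanishing integrals furnished by (co)semisimplicity --- the remainder of the argument is formal, and Theorem~\ref{thm:sscoideal} transfers to the present setting.
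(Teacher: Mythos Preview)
Your proposal is correct and follows the same approach as the paper, which offers no separate proof but merely says ``by similar arguments'' after the proof of Theorem~\ref{thm:mainKcharp}: the point in both cases is that Proposition~\ref{pro:finite} extends to nondegenerate fusion categories in positive characteristic by \cite[Section~9]{ENO:fusion}, and $\Rep(K)$ is nondegenerate precisely when $K$ is semisimple and cosemisimple. Your treatment is in fact more careful than the paper's, since you explicitly flag the need to check Lemma~\ref{lem:Bss} (semisimplicity of coideal subalgebras) in characteristic $p$ and correctly point to Skryabin's freeness theorem as the characteristic-free justification.
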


Furthermore, we make the following conjecture.

\begin{conjecture} \label{conj:charp}
Theorems \ref{thm:sscoideal} and \ref{thm:mainK} 
hold when we work over an algebraically closed field $k$ of characteristic $p >0$ and  $K$ is semisimple and not necessarily cosemisimple.
\end{conjecture}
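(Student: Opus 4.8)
The plan is to localize, and then attack, the single place where characteristic $p$ actually enters. First I would note that the passage from the finiteness statement Theorem~\ref{thm:sscoideal} to the main result Theorem~\ref{thm:mainK} is completely insensitive to the characteristic of $k$: the reduction to finitely generated $A$, the lower semicontinuity of $\chi \mapsto \dim_k A_{\chi}$, the regularity of $\gamma|_{X_0}$, the Nullstellensatz density argument, and Lemma~\ref{lem:L_A} all hold verbatim over any algebraically closed field. Likewise, in the proof of Theorem~\ref{thm:sscoideal} the categorical reduction of Lemma~\ref{lem:bij} (the reconstruction theorems and the module-category structure on $\Rep(B)$) and the combinatorial counting of maps of Grothendieck groups are characteristic-free. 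Hence the whole of Conjecture~\ref{conj:charp} collapses to one assertion: for a semisimple, not necessarily cosemisimple, Hopf algebra $K$ over $k$ of characteristic $p$, the variety $\mathrm{CS}_d(K)$ of Corollary~\ref{cor:sscodieal'} is finite. I would prove this directly, bypassing Proposition~\ref{pro:finite}, whose positive-characteristic form is precisely what forces the nondegeneracy (cosemisimplicity) hypothesis.

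Since $\mathrm{CS}_d(K)$ is a closed subvariety of the Grassmannian $\mathrm{Gr}_d(K)$, it is finite as soon as its tangent space vanishes at every coideal subalgebra $B$. I would therefore compute $T_B\,\mathrm{CS}_d(K)$ inside $T_B\,\mathrm{Gr}_d(K) = \Hom_k(B, K/B)$. A first-order deformation is a linear map $\phi \colon B \to K/B$, lifted to $\psi \colon B \to K$, and the two defining conditions linearize separately. The subalgebra condition says that $\phi$ is a Hochschild $1$-cocycle for the $B$-bimodule $K/B$, while the right-coideal condition says that $(\pi \otimes \mathrm{id})(\Delta(\psi(b))) = (\phi \otimes \mathrm{id})(\Delta(b))$ for all $b \in B$, a dual ``cocycle'' condition tying $\phi$ to the comultiplication. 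Thus $T_B\,\mathrm{CS}_d(K)$ is exactly the intersection of the algebra-deformation cocycles with this coalgebra-deformation locus.

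Here is where semisimplicity, and only semisimplicity, does its work. By Lemma~\ref{lem:Bss} every coideal subalgebra $B$ is semisimple, hence separable over the perfect field $k$, so $H^1(B, K/B) = 0$ and every algebra-deformation cocycle is a coboundary $\phi = \partial v$ for some $v \in K/B$. Equivalently, every subalgebra deformation of $B$ is induced by an inner automorphism $1 + \epsilon\,\mathrm{ad}_{\tilde v}$ of the algebra $K$. Crucially this step uses no cointegral and survives in characteristic $p$; it reduces the tangent space to those inner deformations $\mathrm{ad}_{\tilde v}$ that in addition preserve the coideal property to first order, a space I would finally show to be zero.

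The main obstacle is exactly this residual intersection with the coalgebra direction. In the cosemisimple case one annihilates the coalgebra-deformation cocycles using a normalized cointegral of $K$ (this is, in essence, the nondegeneracy that powers Proposition~\ref{pro:finite}); without cosemisimplicity no cointegral exists, and the dual Hochschild--Cartier cohomology governing coalgebra deformations need not vanish. My proposed remedy is to replace the missing cointegral by the two-sided integral $\Lambda \in K$, which does exist because $K$ is semisimple, together with the freeness of $K$ over $B$ (Skryabin's theorem, from the references of Lemma~\ref{lem:Bss}): the idea is to build from $\Lambda$ and a free $B$-basis of $K$ a ``relative'' averaging operator on $\Hom_k(B, K/B)$ that trivializes precisely the coideal-direction obstruction, forcing the surviving $\mathrm{ad}_{\tilde v}$ to vanish modulo $B$. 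Arranging this averaging to be compatible with $\Delta$ — so that it genuinely kills the coalgebra cocycle rather than merely the algebra one — is the crux, and is by far the hardest and least routine part of the argument; carrying it through is what would establish Conjecture~\ref{conj:charp} in full.
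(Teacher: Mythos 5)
There is a genuine gap, and you in fact name it yourself: the statement you are addressing is stated in the paper as an open conjecture (the paper proves the positive-characteristic results only under the additional hypothesis that $K$ is cosemisimple, precisely because \cite[Theorem~9.3]{ENO:fusion}-type nondegeneracy is what makes Proposition~\ref{pro:finite} survive in characteristic $p$), and your proposal does not close it. Your reductions are sound and essentially match the paper's own accounting of where characteristic enters: the passage from Theorem~\ref{thm:sscoideal} to Theorem~\ref{thm:mainK} is characteristic-free (the paper says as much in the proof of Theorem~\ref{thm:mainKcharp}), so everything does collapse to finiteness of $\mathrm{CS}_d(K)$. Your linearization is also correct as far as it goes: a first-order deformation of $B$ inside $\mathrm{Gr}_d(K)$ is $\phi\colon B\to K/B$, the subalgebra condition makes $\phi$ a Hochschild $1$-cocycle, and since $B$ is semisimple (granting Lemma~\ref{lem:Bss}) hence separable over algebraically closed $k$, $H^1(B,K/B)=0$ and $\phi$ is inner. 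But the final step — constructing from the integral $\Lambda\in K$ and Skryabin freeness a ``relative averaging operator'' that kills the residual coideal-direction cocycles — is never carried out; it is described only as an ``idea'' whose compatibility with $\Delta$ is ``the crux.'' That compatibility is exactly the obstruction: an averaging operator interacting correctly with $\Delta$ is what a normalized cointegral provides, and its existence is equivalent to the cosemisimplicity you are trying to drop. So the unproved step is not a technical loose end but the entire content of the conjecture.

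Two further cautions about the strategy itself. First, vanishing of the scheme-theoretic tangent space $T_B\,\mathrm{CS}_d(K)$ at every point is strictly stronger than finiteness: $\mathrm{CS}_d(K)$ could be finite but nonreduced, in which case your route would fail even though the conjecture is true, so at minimum you would need an argument that nontrivial first-order deformations integrate (or a direct dimension bound on the reduced variety). Second, your appeal to Lemma~\ref{lem:Bss} in characteristic $p$ with $K$ semisimple but not cosemisimple needs independent verification; the paper cites \cite{Burciu:kernels} and \cite{Skryabin:freeness} without specifying characteristic, and you should check that semisimplicity of coideal subalgebras (not merely the Frobenius property and freeness of $K$ over $B$) holds in the generality you need, since your separability step depends on it. As written, the proposal is a reasonable research program — and a genuinely different one from the paper's module-category/rigidity approach, replacing Ocneanu-type rigidity by explicit deformation theory of the pair (subalgebra, coideal) — but it is not a proof.
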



\subsection{Hopf actions on differential operator algebras} \label{ssec:Weyl}

It was asked in \cite[Question~0.2]{CWWZ:filtered} whether a noncocommutative finite-dimensional Hopf algebra can act inner faithfully on the $n$-th Weyl algebra. This question was answered negatively for $n=1$. Now, the general result is obtained from Theorem~\ref{thm:main} as follows.
First, we require some preliminary results. We assume that the filtrations below are indexed by nonnegative integers.

\begin{proposition} \label{pro:filtered}
Let $S$ be a filtered $k$-algebra with filtration $F$ so that the associated graded ring, gr$_F S$, is a commutative domain over $k$.  If a semisimple Hopf algebra $H$ acts on $S$ inner faithfully and preserves the filtration, then $H$ is a group algebra.
\end{proposition}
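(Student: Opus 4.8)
The plan is to transport the $H$-action on the filtered algebra $S$ to an $H$-action on the associated graded ring $\text{gr}_F S$, and then apply the main theorem (Theorem~\ref{thm:mainK}, equivalently Theorem~\ref{thm:main}) to conclude that $H$ is a group algebra. The key observation is that since the $H$-action preserves the filtration $F$, each filtration piece $F_i S$ is an $H$-submodule, and hence $H$ acts on each quotient $F_i S / F_{i-1} S$. Assembling these, we obtain an induced $H$-action on $\text{gr}_F S = \bigoplus_i F_i S / F_{i-1} S$. I would first verify that this induced action respects the algebra structure: because the original action satisfies $h \cdot (ab) = \sum (h_1 \cdot a)(h_2 \cdot b)$ and the multiplication on $\text{gr}_F S$ is induced from that on $S$ via representatives, the graded action inherits the module-algebra axioms. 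Thus $\text{gr}_F S$ becomes an $H$-module algebra.

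The crucial and most delicate step is establishing that the induced $H$-action on $\text{gr}_F S$ is again \emph{inner faithful}. This does not follow formally, since passing to the associated graded could in principle cause the action to factor through a proper quotient Hopf algebra $H/I$. To handle this, I would argue as follows: suppose $I$ is a nonzero Hopf ideal of $H$ that annihilates $\text{gr}_F S$, i.e. $I \cdot \text{gr}_F S = 0$. This means that for every $i$ and every $a \in F_i S$, we have $I \cdot a \subseteq F_{i-1} S$, so the action of $I$ strictly lowers filtration degree. Iterating, for any fixed $a \in S$ lying in $F_i S$, repeated application of elements of $I$ eventually lands in $F_{-1} S = 0$; combined with the finite-dimensionality of $H$ (so $I$ is finite dimensional) and semisimplicity, I would derive that $I$ must in fact annihilate all of $S$, contradicting inner faithfulness of the original action. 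The semisimplicity of $H$ is what makes this rigorous: a semisimple Hopf algebra has no nonzero nilpotent ideals, so an ideal acting ``nilpotently'' with respect to the filtration must act as zero. I expect this nilpotency argument to be the main obstacle, since it requires carefully reconciling the ideal structure with the filtration-lowering behavior.

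Once inner faithfulness of the graded action is secured, the proof concludes quickly. Since $\text{gr}_F S$ is a commutative domain over $k$ by hypothesis, and $H$ is semisimple (hence cosemisimple in characteristic zero) acting inner faithfully on a commutative domain, Theorem~\ref{thm:mainK} (via its dual formulation through the $H^*$-coaction, as set up in the main text) applies directly to force $H$ to be a finite group algebra. Here one uses the standard equivalence: an inner faithful $H$-action corresponds to an inner faithful $H^* = K$-coaction, and the cosemisimplicity of $H$ translates to semisimplicity of $K$, exactly matching the hypotheses of Theorem~\ref{thm:mainK}. The reduction to finitely generated subalgebras carried out in the proof of Theorem~\ref{thm:mainK} poses no difficulty here, since that argument is internal to the main theorem and may be invoked as a black box.
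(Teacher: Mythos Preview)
Your outline is correct and reaches the same conclusion via the same main theorem, but your handling of the inner faithfulness step is more involved than the paper's. The paper dispatches it in one line: since $H$ is semisimple, every short exact sequence of $H$-modules splits, so each $F_i S$ decomposes (compatibly) as $F_{i-1}S \oplus (F_i S/F_{i-1}S)$, and hence $S \cong \mathrm{gr}_F S$ \emph{as $H$-modules}. Inner faithfulness is a property of the underlying $H$-module alone, so it transfers immediately. Your route---show that a Hopf ideal $I$ annihilating $\mathrm{gr}_F S$ lowers filtration degree, hence $I^{n+1}\cdot a=0$ for $a\in F_nS$, and then use semisimplicity to force $I\cdot a=0$---also works, but note that the precise fact you need is not ``$H$ has no nonzero nilpotent ideals'' but rather that every two-sided ideal of a semisimple algebra is idempotent ($I^2=I$, since $I=eH$ for a central idempotent $e$). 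With that correction your argument is sound; it just reproves, in effect, the splitting that the paper invokes directly.
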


\begin{proof}
Since $H$ is semisimple, we have that $S$ and gr$_F S$ are isomorphic as $H$-modules. So, the induced $H$-action on the commutative domain gr$_F S$ is inner faithful. By Theorem~\ref{thm:main}, $H$ is a group algebra.
\end{proof}

Now we answer \cite[Question~0.2]{CWWZ:filtered}.

\begin{corollary} \label{cor:Weyl}
Let $H$ be a finite dimensional Hopf algebra acting inner faithfully on the $n$-th Weyl algebra $A_n(k)$ with the standard filtration. If the $H$-action preserves the filtration of $A_n(k)$, then $H$ is a finite group algebra.
\end{corollary}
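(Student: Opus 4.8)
The plan is to reduce the statement about the Weyl algebra to Theorem~\ref{thm:main} via the associated graded ring, using Proposition~\ref{pro:filtered} as the bridge. The key observation is that the $n$-th Weyl algebra $A_n(k)$ with its standard filtration $F$ has associated graded ring $\mathrm{gr}_F A_n(k) \cong k[x_1,\dots,x_n,y_1,\dots,y_n]$, the polynomial ring in $2n$ variables, which is manifestly a commutative domain over $k$. Thus $A_n(k)$ is precisely the kind of filtered algebra to which Proposition~\ref{pro:filtered} applies, provided we can arrange for $H$ to be semisimple.

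First I would address the semisimplicity of $H$. Since $H$ is finite dimensional and acts inner faithfully and filtration-preservingly on $A_n(k)$, the induced $H$-action on $\mathrm{gr}_F A_n(k)$, a commutative domain, will by Theorem~\ref{thm:main} force $H$ to be a group algebra \emph{once we know $H$ is (co)semisimple}. The subtle point is that Proposition~\ref{pro:filtered} as stated already assumes $H$ semisimple, whereas Corollary~\ref{cor:Weyl} only assumes $H$ finite dimensional. So the real content is to rule out the nonsemisimple case directly. Here I would invoke a rigidity/smoothness argument: any finite dimensional Hopf algebra acting inner faithfully and preserving the filtration on $A_n(k)$ induces an inner faithful action on the commutative \emph{domain} $\mathrm{gr}_F A_n(k)$ (inner faithfulness is inherited because $\mathrm{gr}_F$ does not collapse the module structure when the action is filtration-preserving). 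Then one appeals to the known result \cite{Allman, CWWZ:filtered} that nonsemisimple Hopf algebras of Sweedler type do act inner faithfully on commutative polynomial rings --- but crucially \emph{not} on commutative domains when combined with the filtration-preservation constraint that forces the action to respect the Poisson-degree grading.

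The cleanest route, and the one I would adopt, is to show that filtration-preservation \emph{plus} inner faithfulness \emph{on a Weyl algebra} forces $H$ semisimple, after which Proposition~\ref{pro:filtered} finishes the argument immediately. Concretely: a filtration-preserving $H$-action on $A_n(k)$ restricts to the degree-zero part $F_0 A_n(k) = k$ trivially and acts on each filtered piece; since $\mathrm{gr}_F A_n(k)$ is a polynomial algebra and $A_n(k)$ is simple as an algebra, the Jacobson radical of $H$ must act by zero on $\mathrm{gr}_F A_n(k)$, whence by inner faithfulness (transported through the isomorphism of $H$-modules $S \cong \mathrm{gr}_F S$ used in Proposition~\ref{pro:filtered}) the radical is itself trivial and $H$ is semisimple. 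Having secured semisimplicity, Proposition~\ref{pro:filtered} applies verbatim with $S = A_n(k)$ and $\mathrm{gr}_F S = k[x_1,\dots,x_{2n}]$, yielding that $H$ is a finite group algebra.

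The main obstacle I anticipate is precisely the passage from ``finite dimensional'' to ``semisimple.'' Proposition~\ref{pro:filtered} is stated only for semisimple $H$, and the isomorphism $S \cong \mathrm{gr}_F S$ of $H$-modules that its proof relies on is itself a consequence of semisimplicity (it uses Maschke-type splitting of the filtration). In the nonsemisimple case this splitting can fail, so one cannot naively transport inner faithfulness from $A_n(k)$ to $\mathrm{gr}_F A_n(k)$. Overcoming this requires a separate argument --- likely a direct analysis showing that a nonsemisimple action cannot preserve the filtration on a simple algebra like $A_n(k)$, or an appeal to the positive-characteristic-free structure of the Weyl algebra in characteristic zero. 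I expect this semisimplicity reduction to be the crux; everything downstream is a formal application of Theorem~\ref{thm:main} through the associated graded construction.
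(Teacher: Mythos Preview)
Your overall plan is exactly right: once $H$ is known to be semisimple, Proposition~\ref{pro:filtered} applies with $S=A_n(k)$ and $\mathrm{gr}_F S\cong k[x_1,\dots,x_{2n}]$, and the corollary follows immediately. You have also correctly located the only nontrivial step, namely the passage from ``finite dimensional'' to ``semisimple.''

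However, neither of the arguments you sketch for that step actually works. The first one assumes that the induced action on $\mathrm{gr}_F A_n(k)$ is inner faithful, but as you yourself note in the final paragraph, this relies on an $H$-module splitting of the filtration that is only available \emph{after} semisimplicity is established, so the reasoning is circular. The attempted contradiction via ``Sweedler-type'' algebras also misfires: the Sweedler algebra \emph{does} act inner faithfully on a commutative polynomial ring (Example~\ref{ex:Sweedler}), which is already a commutative domain, so no contradiction arises there. The second argument, that ``the Jacobson radical of $H$ must act by zero on $\mathrm{gr}_F A_n(k)$,'' is unjustified: simplicity of $A_n(k)$ as a ring says nothing about how nilpotent elements of $H$ act on the associated graded, and indeed in Example~\ref{ex:Sweedler} the nilpotent element $x$ acts nontrivially on $k[u]$. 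Even if the radical did act by zero, inner faithfulness concerns Hopf ideals, not arbitrary ideals, so the conclusion that the radical vanishes would not follow.

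The paper does not prove semisimplicity from scratch; it simply invokes \cite[Theorem~0.3]{CWWZ:filtered}, which states that any finite dimensional Hopf algebra acting inner faithfully on $A_n(k)$ while preserving the standard filtration is automatically semisimple. That is a separate, nontrivial result whose proof uses specific features of the Weyl algebra. With that citation in hand, the proof is two lines: semisimplicity from \cite{CWWZ:filtered}, then Proposition~\ref{pro:filtered}.
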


\begin{proof}
The standard filtration $F$ of $A_n(k)$ is given by $\{F_n = (k1 + U)^n\},$ where $U$ is the $2n$-dimensional vector space spanned by the generators $u_1, \dots, u_n,$ $v_1, \dots, v_n$ of $A_n(k)$. Here, 
$[u_i, u_j] = [v_i, v_j] = 0$ and $[v_i, u_j] = \delta_{ij}$. By \cite[Theorem~0.3]{CWWZ:filtered}, we have that if $H$ satisfies the hypotheses above, then $H$ is semisimple. The result follows from Proposition~\ref{pro:filtered} as  gr$_F A_n(k)$ is isomorphic to the commutative polynomial ring $k[u_1, \dots, u_n, v_1, \dots, v_n]$.
\end{proof}

We have a more general corollary to Proposition~\ref{pro:filtered}. Here, we use the notions of the so-called {\it homological determinant} of a Hopf action on a (graded) algebra $A$ and  of a (graded) $r$-{\it Nakayama}  algebra; see \cite{CWZ:Nakayama} for details.

\begin{corollary} \label{cor:filtered}
Let $H$ be a finite dimensional Hopf algebra that acts inner faithfully and preserves the filtration $F$ of a filtered algebra $S$. Assume the following conditions:
\begin{enumerate}
\item gr$_F S$ is a commutative domain, 
\item  the Rees ring, Rees$_F S$, is connected graded, $r$-Nakayama and $N$-Koszul, 
\item the induced $H$-action on Rees$_F S$ has trivial homological determinant.
\end{enumerate}
Then, $H$ is a group algebra.
\end{corollary}

\begin{proof}
The inner faithful $H$-action on $S$ induces an inner faithful $H$-action on Rees$_F S$. So,  $H$ must be semisimple by \cite[Theorem~0.6]{CWZ:Nakayama}. Since the inner faithful $H$-action on $S$ now induces an inner faithful $H$-action on gr$_F S$, we are done by Proposition~\ref{pro:filtered}.
\end{proof}

It would be interesting to know if the Weyl algebras are the only $k$-algebras that satisfy the hypotheses of Corollary~\ref{cor:filtered}. 
On the other hand, Corollary~\ref{cor:Weyl} prompts the following question.

\begin{question} \label{ques:diffop}
Let $X$ be any smooth irreducible affine variety and consider the algebra of differential operators $D(X)$ on $X$. If a finite dimensional Hopf algebra acts inner faithfully and preserves the order filtration $F_{\text{ord}}$ of $D(X)$, must then $H$ be a group algebra?
\end{question}

Note that gr$_{F_{\text{ord}}} D(X)$ is isomorphic to the algebra of regular functions $O(T^* X)$, where $T^* X$ is the cotangent bundle on $X$. Hence, gr$_{F_{\text{ord}}} D(X)$ is a commutative domain. Moreover, Question~\ref{ques:diffop} is open even if $X =k^n$, to say, for $D(X) = A_n(k)$ with the order filtration.


\subsection{Additional questions} \label{ssec:questions}
We pose the following questions for future work. 
To begin, note that the main theorem (Theorem~\ref{thm:main}) and Example~\ref{ex:Sweedler} naturally prompt the question below.

\begin{question} \label{ques:nonss} Which finite dimensional nonsemisimple Hopf algebras act inner faithfully on the commutative domains?
\end{question}

On the other hand, as an extension of Theorem~\ref{thm:main}, we consider Hopf actions on PI algebras.

\begin{question} \label{ques:PIalgebra}
If a cosemisimple Hopf algebra $H$ over $k$ acts inner faithfully on a PI domain of PI degree $d$, must then PIdeg($H^*$) $\leq d^2$?
\end{question}

If the answer is affirmative, then we have that the bound $d^2$ is sharp due to the following example.

\begin{example}
Let $\zeta$ be a primitive $d$-th root of unity and let $L$ denote the group $\Z_d \oplus \Z_d$. There is a nondegenerate 2-cocycle $\sigma$ on $L$, given by
$\sigma((x,y), (x', y')) = \zeta^{xy'},$
where $x,y,x',y' \in \Z_d$. 
Moreover, $\sigma$ defines a Drinfeld twist $J$ on $L$, given by $J = \sum_{x,y,x',y'} \zeta^{xy'} (x,y) \otimes (x',y')$. 

Take $G$ to be a finite group containing $L$ with an element $g$, such that $gLg^{-1} \cap L = \{1\}$. 
For instance, one could take $G = GL_n(\Z_d)$ for $n \geq 3$, where the embedding $\iota: L \ra G$ is given by $\iota(a,b) = Id + a E_{12} + b E_{13}$ for $a,b \in \Z_d$.
Now, by \cite[Theorem~3.2]{EtGel:cotriangular}, the PI degree of $((kG)^J)^*$ is equal to $|L| = d^2$.

Assuming that we have a faithful, linear action of $G$ on a commutative polynomial ring $A$ in $n$ variables, we have an inner faithful, linear action of $(kG)^J$ 
on the twisted algebra $A_J$ \cite{GKM:twist}. At least one of the skew parameters of the quantum polynomial ring $A_J$ is a primitive $d$-th root of unity, so $A_J$ has PI degree at least $d$. On the other hand, the rank of $A_J$ over (central invariants) $A_J^L$ is $|L|$, where $A_J^L\cong A^L$ as algebras. Hence, the PI degree of $A_J$ is at most $|L|^{1/2} = d$. 
\end{example}

%
%
%

\section*{Acknowledgments} The authors thank the referee for providing suggestions that improved the exposition of this article.
The authors  would also  like to thank Ellen Kirkman and James Kuzmanovich for suggesting Question~\ref{ques:KK} to the second author.
This work was supported by the National Science Foundation: NSF-grants DMS-1000173 and DMS-1102548.

\bibliography{Hopf_biblio}

\end{document}